\newtheorem{thm}{Theorem}[section]
\newtheorem{lem}[thm]{Lemma}
\theoremstyle{definition}
\newtheorem{defn}[thm]{Definition}
\numberwithin{equation}{section}
\newcommand{\e}{\varepsilon}
\newcommand{\al}{\alpha}
\newcommand{\R}{\mathbb{R}}
\newcommand{\N}{\mathbb{N}}
\newcommand{\C}{\mathbb{C}}
\newcommand{\D}{\mathscr{D}}
\newcommand{\bmo}{{\rm BMO}}
\newcommand{\vmo}{{\rm VMO}}
\newcommand{\cmo}{{\rm CMO}}
\newcommand{\intav}{-\!\!\!\!\!\!\int}
\DeclareMathOperator{\supp}{supp}
\def\fz{\infty}
\def\dz{\delta}
\def\wz{\widetilde}
\def\ls{\lesssim}
\def\gs{\gtrsim}
\def\lpz{{L^p(\R)}}
\def\Clm{C_{\Gamma}}
\def\r{\right}
\def\lf{\left}
\def\noz{\nonumber}
\begin{document}


\baselineskip=17pt


\title{The Cauchy integral,\\ Bounded and Compact Commutators}

\author{
Ji Li\\ Department of Mathematics\\ Macquarie University\\ NSW 2109, Australia\\
E-mail: ji.li@mq.edu.au
\and
Trang T.T. Nguyen\\ School of Information Technology and Mathematical Sciences\\ University of South Australia\\ Mawson Lakes SA 5095, Australia\\
E-mail: trang.t.nguyen1@mymail.unisa.edu.au
\and
Lesley A. Ward\\ School of Information Technology and Mathematical Sciences\\ University of South Australia\\ Mawson Lakes SA 5095, Australia\\
E-mail: lesley.ward@unisa.edu.au
\and
Brett D. Wick\\ Department of Mathematics\\ Washington University -- St. Louis\\ St. Louis, MO 63130-4899 USA\\
E-mail: wick@math.wustl.edu
}

\date{}

\maketitle


\renewcommand{\thefootnote}{}

\footnote{2010 \emph{Mathematics Subject Classification}: Primary 42B35; Secondary 42B20.}

\footnote{\emph{Key words and phrases}: BMO spaces, VMO spaces, commutators, Cauchy integrals.}

\renewcommand{\thefootnote}{\arabic{footnote}}
\setcounter{footnote}{0}


\begin{abstract}
We study the commutator of the well-known Cauchy integral with
a locally integrable function $b$ on $\mathbb R$, and establish
a characterisation of the BMO space on $\mathbb R$ via the
$L^p$ boundedness of this commutator. Moreover, we also
establish a characterisation of the VMO space on~$\mathbb R$
via the  compactness of this commutator.
\end{abstract}

\section{Introduction and Statement of Main Results}
\setcounter{equation}{0} The \emph{commutator} of a singular
integral operator~$T$ with a function~$b$ is defined by
\[[b,T](f) := bT(f) - T(bf).\]
Commutators arise in various contexts. Here we focus on their
use in characterising the $\bmo$ and~$\vmo$ spaces of functions
of bounded and vanishing mean oscillation, respectively. The
first characterisation of~$\bmo$ via boundedness of commutators
is due to Coifman, Rochberg and Weiss~\cite{CRW76}. They showed
that a function~$b$ is in~$\bmo(\R)$ if and only if the
commutator~$[b,T]$ is bounded on~$L^p(\R^n)$, where~$T$ is a
convolution singular integral operator (SIO). The first
characterisation of~$\vmo$ via compactness of commutators is
due to Uchiyama~\cite{Uch78}. He showed that a function~$b$ is
in~$\vmo(\R)$ if and only if the commutator~$[b,T]$ is compact
on~$L^p(\R^n)$, where~$T$ is a convolution SIO. Since then,
many other proofs of these fundamental results have appeared,
and they have been extended to various settings. Specifically,
the commutators considered are with certain singular integral
operators, including linear, nonlinear and multilinear
operators acting on a variety of underlying spaces. See for
example \cite{Blo85, FL, KL2, HLW17, LW17, LOR, BDMT15, BT13,
CT15,TYY} and the references therein.

The purpose of this paper is to establish such
characterisations when the operator~$T$ is the well-known
Cauchy integral~$\Clm$, which is a particular example of a
non-convolution operator. We state our main results as follows,
starting with the boundedness result.

\begin{thm}\label{thm1}
Suppose $b\in \cup_{1<q<\infty }L^q_{\textup{loc}}(\mathbb R)$
and suppose $p \in (1,\infty)$.  Then the following results
hold.

1. If $b$ is in $\bmo(\mathbb{R})$, then $[b,\Clm]$ is bounded
on $L^p(\mathbb R)$ with
  $$ \|[b,\Clm]: {L^p(\mathbb R)\to L^p(\mathbb R)}\| \leq C_1\|b\|_{\bmo(\mathbb{R})}. $$

2. If $[b,\Clm]$ is bounded on $L^p(\mathbb R)$, then $b$ is in
$\bmo(\mathbb{R})$ with
 $$ \|b\|_{\bmo(\mathbb{R})}  \leq C_2 \|[b,\Clm]: {L^p(\mathbb R)\to L^p(\mathbb R)}\|. $$
\end{thm}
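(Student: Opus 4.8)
The plan is to treat the two parts separately: Part~1 by recognising $\Clm$ as a Calder\'on--Zygmund operator and invoking the standard commutator theory, and Part~2 by the Coifman--Rochberg--Weiss mean-oscillation argument, which transfers to this non-convolution setting because the reciprocal of the kernel separates the variables additively. Throughout, write $\Gamma=\{(t,A(t)):t\in\mathbb R\}$ with $A$ Lipschitz of constant $L$ and put $z(t)=t+iA(t)$, so that $\Clm$ has kernel $K(x,y)=(z(x)-z(y))^{-1}$ (any bounded Jacobian factor $1+iA'(y)$ present in the chosen normalisation is also bounded below in modulus and plays no role below); note $|x-y|\le|z(x)-z(y)|\le(1+L)|x-y|$ and $|\partial_xK(x,y)|+|\partial_yK(x,y)|\lesssim|x-y|^{-2}$.

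For Part~1, the $L^2$-boundedness of $\Clm$ is the Coifman--McIntosh--Meyer theorem, and the displayed kernel bounds are the size/smoothness conditions with constants depending only on $L$; hence $\Clm$ is a Calder\'on--Zygmund operator, bounded on every $L^p$, $1<p<\infty$. The quantitative commutator bound I would get from the Fefferman--Stein sharp maximal function: for $0<\delta<1$ and a fixed $r\in(1,p)$ one proves, by the usual decomposition $f=f\mathbf 1_{2Q}+f\mathbf 1_{(2Q)^c}$ over a cube $Q$, subtracting the constant $\Clm((b-b_Q)f\mathbf 1_{(2Q)^c})(x_Q)$ and using John--Nirenberg together with the kernel smoothness, that $M^{\#}_{\delta}([b,\Clm]f)(x)\lesssim\|b\|_{\bmo}\big(M_r(\Clm f)(x)+M_r f(x)\big)$; then $\|[b,\Clm]f\|_p\lesssim\|M^{\#}_{\delta}([b,\Clm]f)\|_p\lesssim\|b\|_{\bmo}\|f\|_p$, using the $L^p$-boundedness of $M_r$ and of $\Clm$, with the a priori finiteness of $\|[b,\Clm]f\|_p$ arranged by first taking $b$ bounded and then removing the truncation. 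This gives the claimed inequality with $C_1=C_1(p,L)$. (An equally good route is the Coifman--Rochberg--Weiss conjugation $z\mapsto e^{zb}\Clm(e^{-zb}\,\cdot\,)$, fed by the $A_p$-weighted bounds that $\Clm$ enjoys as a Calder\'on--Zygmund operator.)

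For Part~2, fix an interval $I=I(x_0,r)$ and let $\widetilde I:=I(x_0+3r,r)$, a disjoint translate of $I$ of the same length; set $w_0:=z(x_0)$, so that $|z(x)-w_0|\le(1+L)r$ for $x\in I$, $|w_0-z(y)|\le 4(1+L)r$ for $y\in\widetilde I$, and $|z(x)-z(y)|\ge|x-y|\ge r$ on $I\times\widetilde I$. Because $\supp g\subset\widetilde I$ lies at positive distance from $I$ and $b\in L^q_{\loc}$ for some $q>1$, for every $g\in L^\infty$ supported in $\widetilde I$ one has $[b,\Clm]g(x)=\int_{\widetilde I}\frac{b(x)-b(y)}{z(x)-z(y)}g(y)\,dy$ for a.e.\ $x\in I$ (no principal value is needed, and $bg\in L^1(\widetilde I)$). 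Applying this with $g_1:=|\widetilde I|^{-1}\mathbf 1_{\widetilde I}$ and $g_2:=|\widetilde I|^{-1}(w_0-z(\cdot))\mathbf 1_{\widetilde I}$, and using the additive splitting $z(x)-z(y)=(z(x)-w_0)+(w_0-z(y))$, gives for a.e.\ $x\in I$
\[
b(x)-b_{\widetilde I}=\frac{1}{|\widetilde I|}\int_{\widetilde I}\bigl(b(x)-b(y)\bigr)\,dy=(z(x)-w_0)\,[b,\Clm]g_1(x)+[b,\Clm]g_2(x).
\]
Since $|z(x)-w_0|\le(1+L)r$ on $I$, Hölder's inequality with exponents $p,p'$ yields $\int_I|b-b_{\widetilde I}|\le|I|^{1/p'}\big((1+L)r\,\|[b,\Clm]g_1\|_p+\|[b,\Clm]g_2\|_p\big)$; inserting $\|[b,\Clm]g_j\|_p\le\|[b,\Clm]\|_{L^p\to L^p}\|g_j\|_p$ with $\|g_1\|_p=(2r)^{1/p-1}$ and $\|g_2\|_p\le 2(1+L)(2r)^{1/p}$, and recalling $|I|=2r$ and $1/p+1/p'=1$, all powers of $r$ cancel and $\frac1{|I|}\int_I|b-b_{\widetilde I}|\lesssim(1+L)\|[b,\Clm]\|_{L^p\to L^p}$. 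Since $\frac1{|I|}\int_I|b-b_I|\le\frac2{|I|}\int_I|b-c|$ for any constant $c$, taking the supremum over all intervals $I$ gives $\|b\|_{\bmo}\le C_2\|[b,\Clm]\|_{L^p\to L^p}$ with $C_2=C_2(p,L)$.

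The main obstacle is conceptual rather than computational. For a general non-convolution Calder\'on--Zygmund operator the lower bound of Part~2 may fail, and one must exploit some non-degeneracy of the kernel; for $\Clm$ this is essentially automatic, because $1/K(x,y)$ is the affine-type quantity $z(x)-z(y)$, so --- crucially --- no Taylor expansion of the merely Lipschitz function $A$ is needed, only the additive splitting centred at $w_0$. The residual work is bookkeeping: checking that the pointwise commutator identity on $I$ is legitimate for the test functions $g_1,g_2$ under the standing hypothesis $b\in L^q_{\loc}$ with $q>1$ (so that on the bounded set $\widetilde I$ one has $bg_j\in L^1$ and $C_\Gamma(bg_j)$, $bC_\Gamma g_j$ are separately finite a.e.\ on $I$), and verifying that all constants depend only on $p$ and the Lipschitz constant $L$ of $\Gamma$; for Part~1 the only non-elementary input is the classical $L^2$-boundedness of the Cauchy integral, after which everything is standard commutator machinery.
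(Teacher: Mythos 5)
Your proposal is correct, but it proves the theorem by a genuinely different route than the paper. The paper treats both directions as an application of the Li--Wick commutator theorem (\cite[Theorem 1.4]{LW17}): it verifies in Lemma~\ref{lem:Cbded} that $\Clm$ has a standard kernel and is $L^p$-bounded (Coifman--McIntosh--Meyer plus Calder\'on--Zygmund theory), and in Lemma~\ref{lem:Chomo} that $\Clm$ is $1$-$1$-homogeneous, i.e.\ $|\Clm(\chi_{I_1})(x)|\gtrsim 1/M$ on a far-away interval $I_0$, which is the non-degeneracy hypothesis needed for the lower bound in \cite{LW17}. You instead argue directly: for the upper bound you sketch the Fefferman--Stein sharp maximal (or CRW conjugation) machinery, which is the same standard input that underlies the cited theorem; for the lower bound you exploit the algebraic identity $1/K(x,y)=z(x)-z(y)=(z(x)-w_0)+(w_0-z(y))$ to reconstruct $b(x)-b_{\widetilde I}$ from $[b,\Clm]$ applied to the two test functions $g_1,g_2$ supported in a disjoint translate $\widetilde I$, and your bookkeeping (norms of $g_1,g_2$, the H\"older step, the cancellation of powers of $r$, and the a.e.\ validity of the kernel representation off the support, which is where the hypothesis $b\in\cup_{q>1}L^q_{\rm loc}$ enters) is correct; the paper's omitted factor $1/(\pi i)$ and the sign of the kernel only change $C_2$ by an absolute constant. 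What each approach buys: the paper's route is short given \cite{LW17} and its homogeneity lemma applies verbatim to other non-degenerate SIOs, while yours is self-contained, avoids the median/homogeneity verification entirely, and yields explicit constants depending only on $p$ and the Lipschitz constant $L$ by using the special affine structure of the Cauchy kernel --- a structure that is also what makes the paper's Lemma~\ref{lem:Chomo} work, so the two lower-bound arguments rest on the same non-degeneracy of $\Clm$, packaged differently.
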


We also establish the following compactness result.

\begin{thm}\label{thm2}
Suppose $b\in \bmo(\mathbb{R})$  and suppose $p \in
(1,\infty)$. Then the following results hold.

1. If $b$ is in $\vmo(\mathbb{R})$, then $[b,\Clm]$ is a
compact operator on $L^p(\mathbb R)$.

2. If $[b,\Clm]$ is a compact operator on $L^p(\mathbb R)$,
then $b$ is in $\vmo(\mathbb{R})$.
\end{thm}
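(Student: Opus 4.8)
The plan is to follow the standard scheme for characterising $\vmo$ through compactness of commutators, due to Uchiyama and Krantz--Li, adapted to the non-convolution kernel of $\Clm$. Throughout, write $K_\Gamma(x,y)$ for the Calder\'on--Zygmund kernel of $\Clm$, so that $[b,\Clm]$ is the singular integral operator with kernel $k_b(x,y)=(b(x)-b(y))K_\Gamma(x,y)$ off the diagonal; since $\Gamma$ is a Lipschitz graph, $|K_\Gamma(x,y)|\lesssim|x-y|^{-1}$ and $|\partial_xK_\Gamma(x,y)|+|\partial_yK_\Gamma(x,y)|\lesssim|x-y|^{-2}$.

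For part~1 (sufficiency), I would first reduce to $b\in C_c^\infty(\R)$: recall that $\vmo(\R)$ is the closure of $C_c^\infty(\R)$ in $\bmo(\R)$, so picking $b_k\in C_c^\infty(\R)$ with $\|b-b_k\|_{\bmo}\to0$ gives, by Theorem~\ref{thm1}(1), $\|[b,\Clm]-[b_k,\Clm]\|_{L^p\to L^p}=\|[b-b_k,\Clm]\|_{L^p\to L^p}\le C_1\|b-b_k\|_{\bmo}\to0$; as the compact operators are norm-closed, it suffices to show $[b,\Clm]$ is compact for $b\in C_c^\infty(\R)$, say with $\supp b\subseteq[-M,M]$. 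For such $b$ and each $\delta\in(0,1)$ I would split $[b,\Clm]=E_\delta+K_\delta$, where $E_\delta$ has kernel $k_b(x,y)\eta(\delta^{-1}(x-y))$ with $\eta$ a smooth bump equal to $1$ near $0$ and supported in $\{|t|\le1\}$. On $\supp E_\delta$ one has $|k_b(x,y)|\le\|b'\|_\infty|x-y|\,|K_\Gamma(x,y)|\lesssim\|b'\|_\infty$ and the integration in each variable runs over an interval of length $\lesssim\delta$, so Schur's test gives $\|E_\delta\|_{L^p\to L^p}\lesssim\|b'\|_\infty\,\delta$. The kernel of $K_\delta$ vanishes near the diagonal; I would decompose it according to whether $(x,y)\in[-3M,3M]^2$ --- where it is bounded, continuous and compactly supported, hence defines a compact operator on $L^p(\R)$ by Arzel\`a--Ascoli --- or $|y|>3M$ (forcing $b(y)=0$, hence $x\in[-M,M]$), or $|x|>3M$ (symmetrically). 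In the last two regimes one variable is confined to $[-M,M]$ while $|x-y|\gtrsim\max(|x|,|y|)$, so $|K_\Gamma|\lesssim|x-y|^{-1}$ together with $p>1$ places the kernel in $L^p_xL^{p'}_y$ (respectively $L^{p'}_xL^p_y$), which again yields a compact operator. Thus $K_\delta$ is compact for each $\delta$, and since $\|E_\delta\|_{L^p\to L^p}\to0$, $[b,\Clm]=\lim_{\delta\to0}K_\delta$ is compact.

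For part~2 (necessity), I would use the Uchiyama characterisation of $\vmo(\R)$ by the vanishing of $\gamma_1(\varepsilon):=\sup_{|I|\le\varepsilon}\frac1{|I|}\int_I|b-b_I|$ as $\varepsilon\to0$, of $\gamma_2(N):=\sup_{|I|\ge N}\frac1{|I|}\int_I|b-b_I|$ as $N\to\infty$, and of $\gamma_3(N):=\sup_{I\cap[-N,N]=\emptyset}\frac1{|I|}\int_I|b-b_I|$ as $N\to\infty$. The quantitative input comes from the proof of Theorem~\ref{thm1}(2): to each interval $I$ one associates a companion interval $I'=I'(I)$ of comparable length at distance $\approx|I|$, together with normalised functions $f_I$ supported in $I'$ with $\|f_I\|_p=1$ and $g_I$ supported in $I$ with $\|g_I\|_{p'}\le1$, such that
\[
\bigl|\langle[b,\Clm]f_I,\,g_I\rangle\bigr|\;\gtrsim\;\frac1{|I|}\int_I|b-b_I|;
\]
this uses that on $I\times I'$ the kernel $K_\Gamma$ has a real or imaginary part of constant sign and of size $\approx|I|^{-1}$, because the Lipschitz bound gives $|(x-y)+i(\gamma(x)-\gamma(y))|\approx|x-y|\approx|I|$ there. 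Arguing by contradiction, if some $\gamma_j$ does not tend to $0$ there are $\varepsilon_0>0$ and intervals $I_k$ with $\frac1{|I_k|}\int_{I_k}|b-b_{I_k}|\ge\varepsilon_0$ and, respectively, $|I_k|\to0$, $|I_k|\to\infty$, or $\mathrm{dist}(0,I_k)\to\infty$. In each regime (after passing to a subsequence) the test functions satisfy $f_{I_k}\rightharpoonup0$ in $L^p(\R)$: when $|I_k'|\to0$ because $|\langle f_{I_k},g\rangle|\le\|g\mathbf{1}_{I_k'}\|_{p'}\to0$ for every $g\in L^{p'}(\R)$; when $|I_k'|\to\infty$ because $\|f_{I_k}\|_\infty\lesssim|I_k|^{-1/p}\to0$, so $f_{I_k}\to0$ in $L^p_{\mathrm{loc}}(\R)$; and when $\mathrm{dist}(0,I_k')\to\infty$ because $f_{I_k}$ is eventually orthogonal to every compactly supported function in $L^{p'}(\R)$. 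Since $[b,\Clm]$ is compact and $\{f_{I_k}\}$ is bounded in the reflexive space $L^p(\R)$, weak nullity of $f_{I_k}$ forces $\|[b,\Clm]f_{I_k}\|_p\to0$, hence $\langle[b,\Clm]f_{I_k},g_{I_k}\rangle\to0$, contradicting the lower bound. Therefore $\gamma_1,\gamma_2,\gamma_3$ all vanish in the limit and $b\in\vmo(\R)$.

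I expect the main obstacle in part~1 to be the ``tail'' of the kernel of $[b,\Clm]$: the Calder\'on--Zygmund kernel is not absolutely integrable, so a crude Schur estimate fails, and one must use the compact support of $b$ to confine one of the variables to $[-M,M]$ and thereby realise the tail as an operator with kernel in a mixed-norm Lebesgue space (this is where $p>1$ enters). In part~2, the substantive work is the uniform lower bound: choosing the companion interval $I'(I)$ and the non-degenerate (real or imaginary) part of the Cauchy kernel so that the estimate for $\langle[b,\Clm]f_I,g_I\rangle$ holds with constants independent of the location and scale of $I$ --- this is the step that genuinely exploits the structure of $\Clm$ rather than that of an arbitrary Calder\'on--Zygmund operator --- after which the passage to $\vmo$ is the soft weak-convergence argument above.
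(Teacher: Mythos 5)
Your proposal is correct in outline but takes a genuinely different route from the paper in both halves. For $b\in\vmo(\R)\Rightarrow[b,\Clm]$ compact, you make the same density reduction to $b\in C_c^{\infty}(\R)$, but then prove compactness by the kernel splitting $[b,\Clm]=E_\delta+K_\delta$: the near-diagonal piece is killed by a Schur estimate of size $\|b'\|_{\infty}\delta$, and the off-diagonal piece is realised as a compact-kernel operator (a bounded continuous compactly supported kernel on $[-3M,3M]^2$, plus tails where the compact support of $b$ pins one variable and the kernel lies in a mixed norm $L^p_xL^{p'}_y$, which is where $p>1$ enters). The paper instead verifies conditions (a)--(c) of the Fr\'echet--Kolmogorov theorem (Theorem~\ref{t-fre kol}) for $[b,\Clm]E$, which costs the translation estimates ${\rm L}_1$--${\rm L}_4$ and the boundedness of the maximal truncation $C_{\Gamma*}$, but needs no kernel truncation; your route is shorter and equally standard. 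For the converse, the paper proves a quantitative separation statement (Lemma~\ref{lem:vmo-contra}): from intervals with $M(b,I_j)>\e$ it builds median-based functions $f_j$ and shows via the annulus bounds \eqref{lower upper lpbdd C comm}--\eqref{lower upper lpbdd C comm2} that $\|[b,\Clm]f_{j_\ell}-[b,\Clm]f_{j_{\ell+m}}\|_{L^p}$ stays bounded below in each of the three regimes of Definition~\ref{def:VMODafni}, so the image of a bounded set is not totally bounded. You instead pair with dual functions $g_I$ and use that a compact operator on the reflexive space $L^p(\R)$ sends the weakly null sequence $f_{I_k}$ to a norm-null one; this is softer and avoids the annulus estimates entirely. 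Two points you must make explicit for this to be complete: (i) the lower bound $|\langle[b,\Clm]f_I,g_I\rangle|\gtrsim M(b,I)$ does not follow from the sign-definiteness and size $\approx|I|^{-1}$ of $\mathrm{Re}\,\Clm(x,y)$ on $I\times I'$ alone, because $b(x)-b(y)$ changes sign; you need the median/sign-splitting construction (choose subsets of $I$ and $I'$ by comparison with a median of $b$, precisely the device used in Lemma~\ref{lem:vmo-contra} and inside the proof of Theorem~\ref{c:bmo}), and Theorem~\ref{thm1}(2) as stated does not hand you such $f_I,g_I$; (ii) in the third regime you should choose the companion interval $I'$ on the side of $I$ away from the origin (or first dispose of condition (2) of Definition~\ref{def:VMODafni}, as the paper does) so that $\mathrm{dist}(0,I_k')\to\infty$ and the weak nullity of $f_{I_k}$ actually holds. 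With these two details supplied, your argument is a valid alternative proof.
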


In this paper, all functions considered are real-valued
functions defined on~$\R$. For every $x \in \R$, $r\in
\mathbb{R}_+$, we define the interval
$$I(x, r):=(x-r, x+r).$$
For~$\lambda > 0$ we define the dilate~$\lambda J$ of an
interval~$J$ to be the interval with the same midpoint as~$J$
and length~$\lambda|J|$. In particular, $\lambda I(x, r) = I(x,
\lambda r)$ for all~$x \in \R, r\in \mathbb{R}_+$ and~$\lambda
> 0$. Given~$y \in \R$, we define the
translate $J+y := \{x+y : x\in J\}$ of an interval~$J$. We use
the notation~$\intav_I f(x) \,dx := \frac{1}{|I|}\int_I f(x)
\,dx$. Throughout the paper, we denote by $C$ and
$\widetilde{C}$ {positive constants} that are independent of
the main parameters, but that may vary from line to line. For
$p\in(1, \fz)$, $p'$ means the conjugate of $p$: $1/p'+1/p=1$.
If $f\le Cg$, we write $f\ls g$ or $g\gs f$; and if $f \ls g\ls
f$, we  write $f\sim g.$

This paper is organised as follows. In Section~\ref{sec:defns},
we recall some definitions and theorems which will be used in
the proofs of our results. In Section~\ref{sec:boundedness}, we
prove our first result, which is about the relationship
between~$\bmo$ functions and the boundedness of the commutator.
In Section~\ref{sec:compact}, we prove our second result, which
is about the relationship between~$\vmo$ functions and the
compactness of the commutator.

\section{Preliminaries}\label{sec:defns}

In this section we recall the space $\bmo$ of functions of
bounded mean oscillation, the space $\vmo$ of functions of
vanishing mean oscillation, singular integral operators,
Calder\'on--Zygmund operators, the Frech\'et--Kolmogorov
theorem, and the Cauchy integral.

\subsection{$\bmo$ and $\vmo$ spaces}
\begin{defn}\textup{(}\textbf{$\bmo$}\textup{)}\label{def:BMO}
    A locally integrable real-valued function $f: \mathbb{R}
    \rightarrow \mathbb{R}$ is said to be of \emph{bounded mean
    oscillation}, written $f \in \bmo$ or $f \in \bmo(\mathbb{R})$,
    if
    \[
        \|f\|_{\bmo}
        := \sup_{\substack{x\in \R,\\r>0}} M(f,I(x,r))
        := \sup_I\frac{1}{|I|}\int_I \lf|f(x) - f_I\r| \,dx < \infty,
    \]
    where
    \[
        f_I
        := \frac{1}{|I|}\int_I f(y) \,dy
    \]
    is the average of the function~$f$ over the interval~$I$. Here
    $I$ denotes an interval in $\mathbb{R}$.
\end{defn}

We denote by~$\vmo(\R)$ the space of functions of
\emph{vanishing mean oscillation}, defined to be
the~$\bmo(\mathbb{R})$-closure of the set~$ \D := C_c^{\infty}(\R)$
of~$C^{\infty}(\R)$ functions with compact support. We note
that our definition of~$\vmo(\R)$ is the same as that
of~$\vmo(\R)$ in~\cite{CW77}, as well as that of~$\cmo(\R)$
in~\cite{Uch78}.

There are several characterisations of~$\vmo$ in the
literature. Here we use the characterisation appearing
in~\cite{Daf02}.

\begin{defn}\textup{(}\textbf{$\vmo$}\textup{)}\label{def:VMODafni} \cite{Daf02}
A $\bmo$ function~$f:\mathbb{R}\rightarrow\mathbb{R}$ is said to
be of \emph{vanishing mean oscillation}, written~$f \in \vmo$
or~$f \in \vmo(\mathbb{R})$, if

$\textup{(1)}
  \quad \displaystyle\lim_{\delta\rightarrow0}\sup_{I,|I|\leq \delta}\frac{1}{|I|}\int_I|f(x)-f_I|\,dx=0, $\\

$\textup{(2)}
  \quad \displaystyle\lim_{R\rightarrow\infty}\sup_{I,|I| \geq R}\frac{1}{|I|}\int_I|f(x)-f_I|\,dx=0, \text{ and}$\\

$\textup{(3)}
  \quad \displaystyle\lim_{R\rightarrow\infty}\sup_{I,I\cap I(0,R)=\emptyset}\frac{1}{|I|}\int_I|f(x)-f_I|\,dx=0.$
\end{defn}

In~\cite[Lemma, Section~3, p.166]{Uch78}, Uchiyama
characterises the space $\vmo(\mathbb{R}^n)$ (denoted there by
$\cmo(\mathbb{R}^n)$) in terms of three conditions similar to
those in Definition~\ref{def:VMODafni}, but which are expressed
in terms of the quantity $\inf_c \frac{1}{|Q|} \int_Q |f(y) -
c| \, dy$. As we note in Section~\ref{sec:compact}, this
infimum is attained when $c$ is any median~$\alpha_I(f)$ of $f$
on~$I$. It is straightforward to show directly that Uchiyama's
definition is equivalent to Definition~\ref{def:VMODafni}.

We refer the reader to Bourdaud's paper~\cite{Bou02} for a
careful treatment of various $\bmo$ and $\vmo$ spaces, and in
particular a clarification of the confusion of the $\vmo$ and
$\cmo$ notation.

\subsection{Singular Integral Operators}

\begin{defn} \label{CZR} \cite{Chr55}
\textup{(}\textbf{Standard kernel}\textup{)} A \emph{kernel $K$
on $\R$} is a function $K: \R \times \R \rightarrow \R$. A
kernel $K$ is said to \emph{satisfy standard estimates} if
there exist $\delta > 0$ and $C < \infty$ such that for all
distinct $x,y \in \R$ and all $y'$ with $|y-y'| < |x-y|/2$ we
have:

{\rm (i)} $|K(x,y)| \leq C|x-y|^{-1}$,

{\rm (ii)} $|K(x,y) - K(x,y')| \leq C\big( \frac{|y-y'|}{|x-y|}\big)^{\delta} |x-y|^{-1}$, \text{ and}

{\rm (iii)} $|K(y,x) - K(y',x)| \leq C\big( \frac{|y-y'|}{|x-y|}\big)^{\delta} |x-y|^{-1}$.

The smallest constant $C$ for which properties \textup{(i)--(iii)} hold is denoted by $|K|_{CZ}$.
\end{defn}

\begin{defn} \cite{Chr55} (\textbf{Operators associated to a kernel})
Let $\D'$ denote the space of distributions dual to $\D =
C_c^{\infty}(\R)$. A continuous linear operator $T:
C_c^{\infty}(\R) \rightarrow \D'$ is said to be
\emph{associated to a kernel $K$} if whenever $f, g \in
C_c^{\infty}(\R)$ have disjoint supports,  we have
\[\langle Tf,g\rangle = \int_{\R}  \int_{\R} K(x,y)f(y)g(x) \,dy dx.\]
Here the brackets denote the natural pairing of $\D'$ with
$C_c^{\infty}(\R)$.   Since $Tf$ is in the dual $\D'$ of
$C_c^{\infty}(\R)$, it is a bounded linear functional that acts
on functions $g$ in $C_c^{\infty}(\R)$.
\end{defn}

\begin{defn} \cite{Chr55} \label{SIOinR} (\textbf{Singular integral operators on $\R$})
    A \emph{singular integral operator (SIO)}\emph{ on~$\R$} is a
    continuous linear mapping from $C_c^{\infty}(\R)$ to $\D'$
    which is associated to a standard kernel.
\end{defn}

\begin{defn} \cite{Chr55} (\textbf{Calder\'{o}n--Zygmund operators on $\R$})
    Let $T$ be a SIO on $\R$. $T$ is  a \emph{Calder\'{o}n--Zygmund
    operator (CZO)} \emph{on $\R$} if it extends to a bounded
    operator from $L^2(\R)$ to itself.
\end{defn}

A SIO $T$ is \emph{bounded} from $L^p(\R)$ to $L^p(\R)$, for~$p
\in (0,\infty)$, if there exists a constant $C$ such that
$\|Tf\|_p \leq C \|f\|_p$ for all $f\in L^p(\R)$. A SIO $T$ is
\emph{compact} on $L^p(\R)$ if for all bounded sets~$E \subset
L^p(\R)$, $T(E)$ is precompact. A set $S$ is \emph{precompact}
if its closure is compact. A common way to check precompactness
is to use the criteria established in the well known
Frech\'{e}t--Kolmogorov theorem
\cite[p.275]{Yos80},~\cite[pp.111--114]{Bre10}.

\begin{thm}\label{t-fre kol}(\textbf{Frech\'{e}t--Kolmogorov theorem})
For  $1<p<\infty$, a subset $E$ of $L^p(\mathbb R)$ is totally bounded (or precompact) if and only if
the following three statements hold:

{\rm(a)}\ $E$ is uniformly bounded, i.e., $\sup\limits_{f\in
E}\|f\|_{L^p(\mathbb R)}<\infty$;

{\rm(b)}\ $E$ vanishes uniformly at infinity, i.e., for every
$\e>0$, there exists a compact region~$K_{\e}$
 such that for every $f\in E$,
$\|f\|_{L^p(K^c_\e)} < \e;$ and

{\rm(c)}\  $E$ is uniformly equicontinuous, i.e., for every
$f\in E$, $\lim\limits_{|z| \to 0} \|f(\cdot+z) -
f(\cdot)\|_{\lpz} = 0.$
\end{thm}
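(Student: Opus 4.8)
The plan is to prove the two implications separately; necessity is routine and sufficiency is the substantive direction.

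\emph{Necessity.} If $E$ is totally bounded then (a) is immediate. For (b) and (c), fix $\e>0$ and choose a finite $\e/3$-net $f_1,\dots,f_N$ for $E$. Each $f_j\in L^p(\mathbb R)$ individually satisfies (b) (by dominated convergence, or density of $C_c(\mathbb R)$ in $L^p$) and (c) (continuity of translation in $L^p$). Taking the largest truncation radius $R$ and the smallest translation threshold $\delta$ over the finite net, one transfers both properties to every $f\in E$ via the triangle inequality, using the translation invariance of $\|\cdot\|_p$ for (c).

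\emph{Sufficiency.} Assume (a), (b), (c) (the last understood uniformly over $E$), and fix $\e>0$. The argument is a three-fold approximation. First, by (b) choose $R$ with $\|f\|_{L^p(I(0,R)^c)}<\e$ for all $f\in E$ and replace $f$ by its truncation $\tilde f:=f\,\mathbf{1}_{I(0,R)}$, so $\|f-\tilde f\|_p<\e$. Second, mollify: with $\phi_r(x)=r^{-1}\phi(x/r)$ a standard smooth approximate identity supported in $I(0,r)$, Minkowski's integral inequality gives $\|f*\phi_r-f\|_p\le\sup_{|y|\le r}\|f(\cdot-y)-f\|_p$, which by (c) is $<\e$ for all $f\in E$ once $r$ is small enough; together with Young's inequality $\|g*\phi_r\|_p\le\|g\|_p$ this yields $\|f-\tilde f*\phi_r\|_p\ls\e$ uniformly over $E$. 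Third, the family $\{\tilde f*\phi_r:f\in E\}$ is supported in the fixed compact interval $J:=\overline{I(0,R+r)}$, and by H\"older's inequality combined with (a) its members are uniformly bounded on $J$ by $M\|\phi_r\|_{p'}$, with $M:=\sup_{f\in E}\|f\|_p<\infty$, and equicontinuous on $J$ with modulus $M\,\omega_{p'}$, where $\omega_{p'}$ is the $L^{p'}$-modulus of continuity of $\phi_r$; hence by the Arzel\`{a}--Ascoli theorem this family is precompact in $C(J)$, and since $J$ has finite measure it is precompact in $L^p(\mathbb R)$ (extend by zero; on $J$ the sup norm dominates the $L^p$ norm). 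Covering this precompact set by finitely many $L^p$-balls of radius $\e$ and applying the triangle inequality produces a finite $O(\e)$-net for $E$; as $\e>0$ was arbitrary, $E$ is totally bounded.

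\emph{Main obstacle.} The delicate point is the equicontinuity estimate in the third step of the sufficiency argument: one must verify that the modulus of continuity of the mollified, truncated functions is genuinely uniform over $f\in E$ --- this is exactly where hypothesis~(a) enters --- and then carefully carry out the passage from uniform precompactness in $C(J)$ to precompactness in $L^p(\mathbb R)$. The remaining work, namely the $\e$-bookkeeping across the truncation, mollification and finite-net steps, is routine.
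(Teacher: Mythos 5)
Your proof is correct. Note, however, that the paper itself offers no proof of this statement: it is quoted as the ``well-known Fr\'echet--Kolmogorov theorem'' and used as a black box, so there is no internal argument to compare against. What you give is the classical proof of the Kolmogorov--Riesz--Fr\'echet compactness criterion: necessity by transferring the two properties from a finite $\e/3$-net using translation invariance, and sufficiency by truncation via (b), mollification with the error controlled through Minkowski's integral inequality and the uniform version of (c), then H\"older plus Arzel\`a--Ascoli on the fixed compact interval (this is where (a) enters), and finally the passage from $C(J)$ to $L^p$ and the finite-net bookkeeping. All of these steps are sound as you state them.

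One point worth making explicit, and to your credit you already do: the theorem as printed in the paper is not literally correct. Condition (c) as written (``for every $f\in E$, $\lim_{|z|\to 0}\|f(\cdot+z)-f\|_{L^p(\R)}=0$'') holds for every individual $L^p$ function and is therefore vacuous; it must be read as a uniform statement, $\sup_{f\in E}\|f(\cdot+z)-f\|_{L^p(\R)}\to 0$ as $|z|\to 0$, which is exactly how you use it in the mollification step. Likewise the ``closed region $K_\e$'' in (b) must be a bounded set, which you implement by taking $K_\e$ to be a finite interval $I(0,R)$. With these (standard) readings your argument is complete.
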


\subsection{Cauchy Integral \vspace{-0.5cm}} \hfill 

Suppose $\Gamma$ is a curve in the complex plane~$\C$ and~$f$
is a function defined on the curve~$\Gamma$. The \emph{Cauchy
integral} of~$f$ is the operator $\mathcal{C}_{\Gamma}$ given
by
\begin{equation}\label{eq.11.0}
      \mathcal{C}_{\Gamma}(f)(z)
    := \frac{1}{2\pi i} \int_{\Gamma}\frac{f(\zeta)}{\zeta - z}\,d\zeta.
\end{equation}
A curve~$\Gamma$ is said to be a \emph{Lipschitz curve} if it
can be written in the form~$\Gamma = \{x + iA(x): x \in \R\}$
where~$A: \R \rightarrow \R$ satisfies a Lipschitz condition
\begin{equation}\label{eq:l1}
  |A(x_1) - A(x_2)| \leq L|x_1 - x_2| \quad \text{for all } x_1, x_2 \in \R.
\end{equation}
The best constant~$L$ in~\eqref{eq:l1} is referred to as the
\emph{Lipschitz constant} of~$\Gamma$ or of~$A(x)$. One can
show that $A$~satisfies a Lipschitz condition if and only
if~$A$ is differentiable almost everywhere on~$\R$ and~$A'\in
L^\infty(\mathbb R)$. The Lipschitz constant is~$L =
\|A'\|_{\infty}$.

The \emph{Cauchy integral associated with the Lipschitz
curve}~$\Gamma$ is the SIO~$\widetilde{C}_{\Gamma}$ given by
\begin{equation}\label{eq:l2}
  \widetilde{C}_{\Gamma}(f)(x)
  := {\rm p.v.} \frac{1}{\pi i}\int_{\R} \frac{(1 + iA'(y))f(y)}{y-x + i(A(y) - A(x))}\,dy,
\end{equation}
where $f \in C_{c}^{\infty}(\R)$. The kernel
of~$\widetilde{C}_{\Gamma}$ is
\[\widetilde{C}_{\Gamma}(x,y) =  \frac{1}{\pi i}\frac{1 + iA'(y)}{y-x + i(A(y) - A(x))},\]
which is not a standard kernel because the function~$1 + iA'$
does not necessarily possess any smoothness. As noted
in~\cite[p.289]{Gra04}, the~$L^p$-boundedness
of~$\widetilde{C}_{\Gamma}$ is equivalent to that of the
related operator $C_{\Gamma}$ defined by
\begin{equation}\label{eq:l3}
  C_{\Gamma}(f)(x)
  := {\rm p.v.} \frac{1}{\pi i}\int_{\R} \frac{f(y)}{y-x + i(A(y) - A(x))}\,dy.
\end{equation}
Moreover, as we will see in Lemma~\ref{lem:Cbded}, the kernel
\begin{equation}\label{eq:Ckernel}
    \Clm(x,y)
    = \frac{1}{\pi i}\frac{1}{y-x + i(A(y) - A(x))}
\end{equation}
of~$ C_{\Gamma}$ satisfies standard estimates, and
while~$C_{\Gamma}(f)$ is initially defined for $f \in
C_{c}^{\infty}(\R)$, the operator~$C_{\Gamma}$ can be extended
to all~$f \in L^{p}(\R)$, for each $p \in (1,\infty)$.

Note that the operators $\mathcal{C}_{\Gamma}$,
$\widetilde{C}_{\Gamma}$ and $C_{\Gamma}$ defined
in~\eqref{eq.11.0}, \eqref{eq:l2} and~\eqref{eq:l3} are all
distinct. For the rest of this paper, we work with the
operator~$\Clm$ given by equation~\eqref{eq:l3}; we call~$\Clm$
the \emph{Cauchy integral}. Also, for convenience we omit the
factor~$1/(\pi i)$ from its kernel from here on.

\section{Proof of Theorem \ref{thm1}: Boundedness of~$[b,\Clm]$}
\label{sec:boundedness} In this section, we prove our first
result, which is about the boundedness of the
commutator~$[b,\Clm]$. The main ingredient in the proof of
Theorem~\ref{thm1} is the characterisation of the function
space~$\bmo(\R^n)$  via commutators in a multilinear
($m$-linear) setting. The necessity of the~$\bmo$ condition was
proved in~\cite{CRW76} in the linear setting~($m=1$) on~$\R^n,
n \geq 1$. For the $m$-linear setting on~$\R^n, n \geq 1$, it
was proved in \cite{Cha16}. The sufficiency of the~$\bmo$
condition in the $m$-linear setting on~$\R^n, n \geq 1$ was
shown in~\cite{LOPTT09, PT03, Tan08}. These results are also
stated as  Theorem~1.4 in~~\cite{LW17}. See also the recent
paper \cite{LOR}. In this paper, we work in the linear setting
($m=1$) with the real line~($n=1$) being the underlying space.
Below we state these results in the special case where~$m=n=1$.

\begin{thm}\label{c:bmo}~\cite{CRW76}
    Suppose that $T$ is an $L^p$-bounded SIO for some~$p$ with~$1<p<\infty$.
    If $b$ is in $\rm BMO(\mathbb{R})$, then  the commutator
    $[b,T]$ is a bounded map from $L^{p}(\mathbb{R})$ to
    $L^{p}(\mathbb{R})$ for all~$p$ with~$1<p<\infty$, with
    $$
    \|[b,T]: L^{p}(\mathbb{R})\to L^{p}(\mathbb{R})\| \leq C\|b\|_{\rm BMO(\mathbb{R})}.
    $$
\end{thm}

\begin{thm}\label{c:bmo1}~\cite{LOPTT09, PT03, Tan08}
    Suppose that $b\in L^p_{\textup{loc}}(\mathbb{R})$ and $T$ is
    1-1-homogeneous. If  $[b,T]$ is bounded from
    $L^{p}(\mathbb{R})$ to $L^{p}(\mathbb{R})$ for some~$p$ with
    $1< p<\infty$, then $b$ is in $\rm BMO(\mathbb{R})$ with $$
    \|b\|_{\rm BMO(\mathbb{R})} \leq C \| [b,T]:
    L^{p}(\mathbb{R})\to L^{p}(\mathbb{R})\|.$$
\end{thm}

\begin{defn}
A SIO $T$  is called \emph{$m$-$n$-homogeneous} if there exists
a constant~$C>0$ such that for all~$M>10$, for all~$r>0$, and
for all collections of~$m+1$ pairwise disjoint balls
$B_0(x_0,r), \ldots, B_m(x_m,r)$ in~$\R^n$ satisfying the condition
\[
    |y_0 - y_l|
    \sim Mr
    \quad \text{for all } y_0 \in B_0
    \text{ and for all } y_l \in B_l, l = 1,2,\ldots,m,
\]
we have
\[
    |T(\chi_{B_1},\ldots,\chi_{B_m})(x)|
    \geq \frac{C}{M^{mn}} \quad \text{for all } x \in B_0(x_0,r).
\]
\end{defn}

We note again that in this paper, ~$m=n=1$. If we can show that
the Cauchy integral~$C_{\Gamma}$ satisfies the hypotheses of
Theorems~\ref{c:bmo} and~\ref{c:bmo1}, then Theorem~\ref{thm1}
is proved. Specifically, if we can show that~$C_{\Gamma}$ is an
$L^p$-bounded SIO for some~$p \in (1, \infty)$, then the first
part of Theorem~\ref{thm1} is proved. Similarly, if we can show
that~$C_{\Gamma}$ is 1-1-homogeneous, then the second part of
Theorem~\ref{thm1} is proved. These results are presented in
Lemma~\ref{lem:Cbded} and~\ref{lem:Chomo}.

\begin{lem}\label{lem:Cbded}
The Cauchy integral~$C_{\Gamma}$ is an $L^p$-bounded SIO, for
every $p \in (1, \infty)$.
\end{lem}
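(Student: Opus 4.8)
The plan is to verify two facts: (a) the kernel $\Clm(x,y)$ displayed in~\eqref{eq:Ckernel} satisfies the standard estimates of Definition~\ref{CZR}, and (b) $C_{\Gamma}$ extends to a bounded operator on $L^p(\R)$ for every $p\in(1,\infty)$. Together with the observation that the principal-value integral~\eqref{eq:l3} realises $C_{\Gamma}$ as a continuous linear map $C_c^{\infty}(\R)\to\D'$ which is associated to $\Clm$ (the p.v.\ integral agrees with integration against $\Clm(x,y)$ whenever the test functions have disjoint supports), these two facts are exactly the assertion that $C_{\Gamma}$ is an $L^p$-bounded SIO.

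For (a) the single inequality doing all the work is
\[
\bigl|\,y-x+i(A(y)-A(x))\,\bigr| \;=\; \sqrt{(y-x)^2+(A(y)-A(x))^2} \;\ge\; |y-x|,
\]
valid for all $x\ne y$; this gives the size estimate (i) immediately. For the smoothness estimates I would write $Z:=y-x+i(A(y)-A(x))$ and $Z':=y'-x+i(A(y')-A(x))$, so that
\[
\Clm(x,y)-\Clm(x,y') \;=\; \frac{1}{\pi i}\,\frac{Z'-Z}{Z\,Z'}.
\]
The numerator obeys $|Z'-Z|=\bigl|(y'-y)+i(A(y')-A(y))\bigr|\le(1+L)\,|y-y'|$ by the Lipschitz condition~\eqref{eq:l1}, while the displayed inequality gives $|Z|\ge|x-y|$, and the hypothesis $|y-y'|<|x-y|/2$ gives $|Z'|\ge|x-y'|\ge|x-y|-|y-y'|>|x-y|/2$; hence $|Z\,Z'|\ge|x-y|^2/2$ and
\[
\bigl|\Clm(x,y)-\Clm(x,y')\bigr| \;\le\; \frac{2(1+L)}{\pi}\,\frac{|y-y'|}{|x-y|^2},
\]
which is estimate (ii) with $\delta=1$. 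Estimate (iii) follows in the same way after replacing $Z,Z'$ by $x-y+i(A(x)-A(y))$ and $x-y'+i(A(x)-A(y'))$ and invoking the same two lower bounds. Thus $\Clm$ is a standard kernel, with $|\Clm|_{CZ}$ controlled by $1+\|A'\|_{\infty}$, and in fact any $\delta\in(0,1]$ works.

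For (b) I would quote the celebrated theorem of Coifman, McIntosh and Meyer that the Cauchy integral $\widetilde{C}_{\Gamma}$ on a Lipschitz curve is bounded on $L^2(\R)$. By the equivalence of the $L^p$ boundedness of $\widetilde{C}_{\Gamma}$ with that of $C_{\Gamma}$ recorded above (see~\cite{Gra04}), $C_{\Gamma}$ is bounded on $L^2(\R)$. Since part (a) shows $C_{\Gamma}$ has a standard kernel, $C_{\Gamma}$ is a Calder\'on--Zygmund operator, and the classical Calder\'on--Zygmund theory upgrades $L^2$ boundedness to $L^p$ boundedness for all $p\in(1,\infty)$ (with weak type $(1,1)$ at the endpoint). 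This completes the proof.

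The only deep ingredient is the $L^2$ boundedness of the Cauchy integral on a Lipschitz curve, which is treated here as a black box, so there is no serious mathematical obstacle; the rest is bookkeeping. The one point that genuinely requires care is that $\widetilde{C}_{\Gamma}$ itself does \emph{not} have a standard kernel (as already noted, the factor $1+iA'$ carries no smoothness), so the $L^2\Rightarrow L^p$ machinery cannot be applied to $\widetilde{C}_{\Gamma}$ directly: one must first pass to $C_{\Gamma}$, whose kernel~\eqref{eq:Ckernel} is smooth in each variable off the diagonal, before invoking Calder\'on--Zygmund theory.
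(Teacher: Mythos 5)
Your proof is correct and takes essentially the same route as the paper: show that the kernel~\eqref{eq:Ckernel} satisfies the standard estimates (the paper cites Example~4.1.6 of~\cite{Gra04} for exactly the computation you write out, with the same constant $2(L+1)$ and $\delta=1$), invoke the Coifman--McIntosh--Meyer theorem for $L^2$ boundedness, and then upgrade to $L^p$ for all $p\in(1,\infty)$ via classical Calder\'on--Zygmund theory. The only differences are cosmetic: you spell out the kernel estimates rather than citing them, and you are slightly more explicit than the paper about passing from $\widetilde{C}_{\Gamma}$ to $C_{\Gamma}$ before applying the $L^2\Rightarrow L^p$ machinery.
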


\begin{proof}
The Cauchy integral~$C_{\Gamma}$ is a SIO if it is a continuous
linear mapping from $C_c^{\infty}(\R)$ to $\D'$ that is
associated to a standard kernel. Recall that the kernel
of~$C_{\Gamma}$ is
\[
    \Clm(x,y)
    = \frac{1}{y-x + i(A(y) - A(x))}.
\]
In Example 4.1.6 in~\cite{Gra04}, it is noted that~$\Clm(x,y)$
is a standard kernel. In particular,~$\Clm(x,y)$ has the
following properties, for all~$x, y, y' \in \R$ such that
$|y-y'| \leq \frac{1}{2}|y-x|$:
\begin{equation}\label{eq:thm1.1}
  |\Clm(x,y)| \leq \frac{1}{|y-x|},
\end{equation}

\begin{equation}\label{eq:thm1.2}
  |\Clm(x,y) - \Clm(x,y')| \leq \frac{2(L+1)|y'-y|}{|y-x |^2}
\end{equation}
where $L>0$ is the Lipschitz constant of~$A(x)$, and
\begin{equation}\label{eq:thm1.3}
  |\Clm(y,x) - \Clm(y',x)| \leq \frac{2(L+1)|y'-y|}{|y-x |^2}.
\end{equation}
Therefore, the Cauchy integral~$C_{\Gamma}$ is a SIO.

Coifman, McIntosh and Meyer~\cite{CMM82} showed that
$C_{\Gamma}$ is bounded on~$L^2$. Additionally, Calder\'{o}n
and Zygmund showed that a SIO which is bounded on~$L^2$ is also
bounded on~$L^p$ for all~$p \in (1,\infty)$. Thus the Cauchy
integral~$C_{\Gamma}$ is bounded on ~$L^p$ for all~$p \in
(1,\infty)$. This completes the proof of Lemma~\ref{lem:Cbded}.
\end{proof}

\begin{lem}\label{lem:Chomo}
The Cauchy integral~$C_{\Gamma}$ is 1-1-homogeneous.
\end{lem}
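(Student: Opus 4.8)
The plan is to verify the definition of $1$-$1$-homogeneity directly from the explicit formula for the kernel. Recall that in this setting $m = n = 1$, so we are given $M > 10$, $r > 0$, and two disjoint intervals $B_0 = I(x_0, r)$ and $B_1 = I(x_1, r)$ with $|y_0 - y_1| \approx Mr$ for all $y_0 \in B_0$, $y_1 \in B_1$; we must produce a constant $C > 0$, independent of $M$ and $r$, such that
\[
\bigl| C_\Gamma(\chi_{B_1})(x) \bigr| \geq \frac{C}{M} \qquad \text{for all } x \in B_0.
\]
Since $B_0$ and $B_1$ are disjoint, there is no principal-value issue and we simply have $C_\Gamma(\chi_{B_1})(x) = \int_{B_1} \Clm(x, y)\, dy$ with $\Clm(x,y) = \bigl(y - x + i(A(y) - A(x))\bigr)^{-1}$.

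First I would reduce the estimate on the modulus of a complex integral to an estimate on the modulus of its real part (or, symmetrically, its imaginary part): it suffices to show $\bigl|\operatorname{Re} \int_{B_1}\Clm(x,y)\,dy\bigr| \gtrsim 1/M$, say. Writing $\Clm(x,y) = \frac{(y - x) - i(A(y) - A(x))}{(y-x)^2 + (A(y) - A(x))^2}$, the real part of the integrand is $\frac{y - x}{(y-x)^2 + (A(y) - A(x))^2}$. For $x \in B_0$ and $y \in B_1$ the numerator $y - x$ has a \emph{fixed sign} (the intervals are disjoint), so there is no cancellation in the real part — this is the crucial structural point. It remains to bound the integrand below in absolute value. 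For the numerator, $|y - x| \geq \tfrac12|x_0 - x_1| \gtrsim Mr$ using the near-orthogonality hypothesis (more precisely, $|y - x|\gtrsim Mr$ whenever $Mr$ is large compared to $r$, which holds since $M>10$). For the denominator, the Lipschitz bound gives $|A(y) - A(x)| \leq L|y - x|$, hence $(y-x)^2 + (A(y)-A(x))^2 \leq (1 + L^2)(y - x)^2 \lesssim (1+L^2)(Mr)^2$; we also need an upper bound of the same order, $|y-x|\lesssim Mr$, which again follows from $|y_0 - y_1|\approx Mr$ and $M$ large. Combining, the real part of $\Clm(x,y)$ has absolute value $\gtrsim \frac{Mr}{(1+L^2)(Mr)^2} = \frac{1}{(1+L^2)Mr}$, uniformly in $y \in B_1$, with a fixed sign. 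Integrating over $B_1$, whose length is $2r$, yields
\[
\Bigl| \operatorname{Re} \int_{B_1} \Clm(x,y)\, dy \Bigr| \;\gtrsim\; \frac{2r}{(1+L^2)Mr} \;=\; \frac{2}{(1+L^2)M},
\]
which is the desired lower bound $C/M$ with $C$ depending only on $L$ and the implied constants in $|y_0 - y_1|\approx Mr$.

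The main obstacle — though it is more a matter of care than of difficulty — is tracking the constants in the condition $|y_0 - y_1| \approx Mr$ and ensuring the lower bound on $|y - x|$ and the upper bound on $|y - x|$ are genuinely comparable to $Mr$ with constants independent of $M$; this is where one uses $M > 10$ to absorb the $O(r)$ fluctuations coming from the widths of $B_0$ and $B_1$ into the $O(Mr)$ separation. A secondary point to handle cleanly is that the sign of $\operatorname{Re}\Clm(x,y)$, while constant over $y\in B_1$ for fixed $x$, is what prevents the integral from being small; one should state explicitly that $y - x$ does not change sign on $B_0\times B_1$. Once these bookkeeping issues are dispatched, the lemma follows, and together with Lemma~\ref{lem:Cbded} it verifies both hypotheses of Theorem~\ref{c:bmo}, completing the proof of Theorem~\ref{thm1}.
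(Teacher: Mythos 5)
Your proposal is correct and follows essentially the same route as the paper's proof: reduce to the real part of the integral, use the disjointness of the intervals to fix the sign of $y-x$ so there is no cancellation, bound the denominator via the Lipschitz estimate $|A(y)-A(x)|\le L|y-x|$ together with $|y-x|\approx Mr$, and integrate over the interval of length $2r$ to obtain the lower bound $\frac{C}{(1+L^2)M}$. No gaps to report.
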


\begin{proof}
We need to show that there exists a constant~$C>0$ such that
for all~$M>10$, for all~$r>0$, and for all  disjoint
intervals~$I_0=I_0(x_0,r)$ and~$I_1=I_1(x_1,r)$ satisfying the
condition
\begin{equation}\label{eq:thm1.4}
|y_0 - y_1| \sim Mr \quad \text{for all } y_0 \in I_0, y_1 \in I_1,
\end{equation}
we have
\[|C_{\Gamma}(\chi_{I_1})(x)| \geq \frac{C}{M} \quad \text{for all } x \in I_0(x_0,r).\]

Fix an~$M>10$, $r>0$, and disjoint intervals~$I_0=I_0(x_0,r)$
and~$I_1=I_1(x_1,r)$ satisfying the
condition~\eqref{eq:thm1.4}. Note that by the choice of the
intervals~$I_0$ and~$I_1$, for each fixed~$y_0 \in I_0$  we
have either
\[y_1 > y_0 \quad \text{for all } y_1 \in I_1, \quad \text{ or }
\quad y_1 < y_0 \quad \text{for all } y_1 \in I_1.\] We will
consider the case~$y_1 > y_0$. The case~$y_1<y_0$ follows
exactly the same reasoning. Now for each~$x \in I_0$, $y \in
I_1$ satisfying the condition $|x-y| \sim Mr$  and Lipschitz
function~$A$ we have
\begin{align}\label{eq:lem2.4}
  |C_{\Gamma}(\chi_{I_1})(x)| &= \bigg|{\rm p.v.} \int_{\R} \frac{\chi_{I_1}(y)}{y-x + i(A(y) - A(x))}\,dy\bigg|\noz\\
   &= \bigg|\int_{y \in I_1} \frac{y-x  -i(A(y) - A(x))}{(y-x)^2 + (A(y) - A(x))^2} \,dy\bigg|\noz\\
   &\geq  \int_{y \in I_1} \frac{y-x }{(y-x)^2 + (A(y) - A(x))^2} \,dy \noz \\
   &\gtrsim  \frac{1}{(L^2+1)} \frac{1}{Mr}|I_1|\noz\\
   &=\frac{2}{(L^2+1)} \frac{1}{M}.  \noz  
\end{align}
This estimate holds for all~$M>10$, for all~$r>0$, for all
disjoint intervals~$I_0=I_0(x_0,r)$ and~$I_1=I_1(x_1,r)$
satisfying the condition~\eqref{eq:thm1.4}, so~$C_{\Gamma}$ is
1-1-homogeneous, as required.
\end{proof}

As noted above, the results of Theorems~\ref{c:bmo}
and~\ref{c:bmo1}, coupled with Lemmas~\ref{lem:Cbded}
and~\ref{lem:Chomo}, establish Theorem~\ref{thm1}.
\hfill\(\Box\)

\section{Proof of Theorem \ref{thm2}: Compactness of~$[b,\Clm]$}
\label{sec:compact} The idea of the proof of Theorem~\ref{thm2}
is originally due to Uchiyama~\cite{Uch78}. The main
ingredients of the proof are the~$\vmo$
characterisation~(Definition~\ref{def:VMODafni}) and the
Frech\'{e}t-Kolmogorov theorem (Theorem~\ref{t-fre kol}). To
prove the sufficiency in Theorem~\ref{thm2}, that is  if~$[b,
\Clm]$ is a compact operator on~$L^p(\R)$, then~$b \in
\vmo(\R)$, we use contradiction argument via
Definition~\ref{def:VMODafni}. Specifically, we show that
if~$b$ fails to satisfy any one of the conditions~(1)--(3) in
Definition~\ref{def:VMODafni}, then the commutator~$[b,\Clm]$
is not compact. To prove the necessity in Theorem~\ref{thm2},
that is  if~$b \in \vmo(\R)$, then~$[b,\Clm]$ is a compact
operator on~$L^p(\R)$, we first reduce to showing
that~$[b,\Clm]$ is compact for~$b \in C_c^{\infty}(\R)$. Then
we show that for all bounded subsets~$E \subset L^p(\R)$,
$[b,\Clm]E$ is precompact, using Theorem~\ref{t-fre kol}. This
implies that~$[b,\Clm]$ is compact on~$L^p(\R)$.

The proof of Theorem~\ref{thm2} requires lower and upper bounds
for integrals of $\lf|[b, \Clm]f_j\r|^p$ over certain intervals, where
$\{f_j\}_j$ is a certain bounded subset of $L^p(\R)$ and
$b\in\bmo(\R)$. These bounds will be obtained in
Lemma~\ref{lem:vmo-contra} below.

\begin{lem}
\label{lem:vmo-contra}
    Assume that $b\in\bmo(\R)$ with
    $\|b\|_{\bmo(\R)}=1$ and there exist $\e >0$ and a
    sequence $\{I_j\}_{j=1}^\fz:=\{I(x_j, r_j)\}_j$ of intervals
    such that for each $j \in \N$,
    \begin{equation}\label{lower bdd osci}
    M(b, I_j)= \intav_{I_j}|b(y) - b_{I_j}| \,dy>\e.
    \end{equation}
    For $j\in\N$, $k\in\N \cup \{0\}$, let
    \[
        I_j^k
        := \lf(x_j+2^kr_j,\,x_j+2^{k+1}r_j\r).
    \]
    Fix $p\in(1,\infty)$. Then there exist functions $\{f_j\}_j\subset L^p(\R)$ and
    positive constants $A_1>4$, $\wz C_1$ and $\wz C_2$
    such that for all $j \in \N$ and $k\ge\lfloor\log_2 A_1\rfloor$, we have
    \begin{equation}\label{lower upper lpbdd C comm0}
      \|f_j\|_\lpz \ls 1 \qquad \text{with constant independent of } j,
    \end{equation}
    \begin{equation}\label{lower upper lpbdd C comm}
        \int_{I_j^k}\lf|\lf[b, \Clm\r]f_j(y)\r|^p\,dy
        \geq\wz C_1\e^p \frac{1}{2^{k(p-1)}},
        \qquad \text{ and } 
    \end{equation}
    \begin{equation}\label{lower upper lpbdd C comm2}
        \int_{2^{k+1}I_j\setminus 2^k I_j}\lf|\lf[b, \Clm\r]f_j(y)\r|^p \,dy
        \leq \wz C_2 \frac{1}{2^{k(p-1)}}.
    \end{equation}
    The functions $\{f_j\}$ and the constants $\wz C_1$ and $\wz
    C_2$ depend on $p$ but not on $j$ or $k$, while $A_1$ is
    independent of $p$, $j$, and $k$.
\end{lem}

Before proving Lemma~\ref{lem:vmo-contra}, we recall some
results related to the \emph{median value}~$\al_I(f)$ of a
function~$f$ on an interval~$I$. See \cite{CSS12},
\cite{Ler11}, \cite[pp.160--166]{Gra04} and~\cite[p.30]{Jou83}
for more details. When~$f \in L^1_{\text{loc}}(\R)$ and~$I$ is
any interval on~$\R$, the constants~$c=\al_I(f)$ for
which~$\inf_c \frac{1}{|I|}\int_I |f(x) - c| \,dx$ is attained
are the ones that satisfy
\begin{equation}\label{eq:thm2.1}
  |\{x \in I: f(x) > \al_I(f)\}| \leq \frac{1}{2}|I| \quad \text{ and}
\end{equation}
\begin{equation}\label{eq:thm2.2}
  |\{x \in I: f(x) < \al_I(f)\}| \leq \frac{1}{2}|I|.
\end{equation}
Note that given a function~$f$ and an interval~$I$, the median $\al_I(f)$ may not be uniquely determined. In each such case, we mean by~$\al_I(f)$ a particular fixed value of the median.
Additionally, using the John--Nirenberg inequality and H\"{o}lder's inequality, we obtain that for all~$p$ with~$1\leq p<\infty$ and for all~$f \in L^1_{\text{loc}}(\R)$,
\begin{equation}\label{eq:intro1}
  \sup_I\lf(\frac{1}{|I|}\int_I \lf|f(x) - f_I\r|^p \,dx\r)^{1/p} \sim \|f\|_{\bmo}, \quad \text{and}
\end{equation}
\begin{equation}\label{eq:intro2}
  \sup_I\lf(\frac{1}{|I|}\int_I \lf|f(x) - \al_I(f)\r|^p \,dx\r)^{1/p} \sim \|f\|_{\bmo}.
\end{equation}
Also, as shown in~\cite[Equation (2.2)]{CSS12}, for each interval~$I \subset \mathbb{R}$,
\begin{equation}\label{eq:thm2.3}
  \frac{1}{|I|}\int_I \lf|f(x) - f_I\r| \,dx  \sim  \frac{1}{|I|}\int_I \lf|f(x) - \al_I(f)\r| \,dx.
\end{equation}

Now we will prove Lemma~\ref{lem:vmo-contra}.

\noindent\emph{Proof of Lemma~\ref{lem:vmo-contra}.}
  For each $j \in \N$, define
  \begin{eqnarray*}
    f_j &:=& |I_j|^{-1/p}\lf(f^1_j-f^2_j\r), \text{ where} \\
    f^1_j &:=& \chi_{I_{j,\,1}}-\chi_{I_{j,\,2}}:=\chi_{\{x\in I_j:\, b(x)>\al_{I_j}(b)\}}-\chi_{\{x\in I_j:\, b(x)<\al_{I_j}(b)\}}, \\
    f^2_j &:=& a_j\chi_{I_j},
  \end{eqnarray*}
  and $a_j$ is a constant chosen so that
  \begin{equation}\label{fj proper-2}
 \int_{\R} f_j(x)\,dx=0.
  \end{equation}

We claim that the following properties hold:
 \begin{equation}\label{eq:thm2.10}
 |a_j|\le 1/2,
\end{equation}
\begin{equation}\label{eq:thm2.4}
\supp(f_j)\subset I_j,
\end{equation}
\begin{equation}\label{fj proper-1}
 f_j(y)\lf[b(y)-\al_{I_j}(b)\r]\ge0 \quad \text{ for all } y\in I_j, \text{ and }
 \end{equation}
 \begin{equation}\label{fj proper-3}
|f_j(y)|\sim \lf|I_j\r|^{-1/p} \quad \text{ for all } y\in (I_{j,\,1}\cup I_{j,\,2}).
\end{equation}
To see~\eqref{eq:thm2.10}, we start with equation~\eqref{fj
proper-2}. By the definition of~$f^1_j$ and~$f^2_j$, and using
property~\eqref{eq:thm2.2}  of the median, we see that
 \begin{align*}\label{eq:thm2.8}
   0 = \int_{\R} f_j(x)\,dx 
    &= \int_{\R} |I_j|^{-1/p}\lf(\chi_{I_{j,\,1}}(x)-\chi_{I_{j,\,2}}(x)-a_j\chi_{I_j}(x)\r)\,dx \noz\\
    &=  |I_j|^{-1/p} \lf(\lf|I_{j,\,1}\r| - \lf|I_{j,\,2}\r| - a_j\lf|I_{j}\r|\r) \\
&\geq |I_j|^{-1/p} \lf(\lf|I_{j,\,1}\r| - \frac{\lf|I_{j}\r|}{2} - a_j\lf|I_{j}\r|\r) \\
    &\geq-\lf(\frac{1}{2}+a_j\r)|I_j|^{-1/p}\lf|I_{j}\r| = -\lf(\frac{1}{2}+a_j\r)|I_j|^{1/p'}. 
 \end{align*}
Hence~$a_j\geq -1/2$. Similarly, using~\eqref{fj proper-2} and
property~\eqref{eq:thm2.1}  of the median, we see that~$a_j
\leq 1/2$. Hence~$|a_j|\leq 1/2$, as required.
Equation~\eqref{eq:thm2.4} is immediate from the definition
of~$f_j$.

To see~\eqref{fj proper-1}, we consider the
three~cases when~$y \in I_{j,1}$, $y \in I_{j,\,2}$, and $y \in
I_j\setminus (I_{j,\,1} \cup I_{j,\,2})$. If~$y \in I_{j,1}$,
then  by the definitions of~$f_j^1$ and~$f_j^2$ and
equation~\eqref{eq:thm2.10} we have
 \[b(y) > \al_{I_j}(b), \quad f^1_j(y) = 1 > 0\quad \text{and}\quad  f^2_j \leq \frac{1}{2}. \]
These yield
 \begin{equation}\label{eq:thm2.14}
    f_j(y)\lf[b(y)-\al_{I_j}(b)\r]>0\quad \text{for all } y \in I_{j,\,1}.
 \end{equation}
The case of~$y \in I_{j,\,2}$ is similar.
Next, if~$y \in I_j\setminus (I_{j,\,1} \cup I_{j,\,2})$, then
$b(y) = \al_{I_j}(b)$ and so $f_j(y) \lf[b(y) - \al_{I_j}(b)\r] = 0$. Thus inequality~\eqref{fj proper-1} holds for all~$y
\in I_j$.

To see~\eqref{fj proper-3} we first note that
\begin{equation}\label{eq:thm2.19}
   \lf|f_j(y)\r|
   = |I_j|^{-1/p}\lf|f^1_j(y)-f^2_j(y)\r|
   \geq \frac{1}{2}|I_j|^{-1/p}
   \quad \text{for all } y \in I_{j,\,1}\cup I_{j,\,2}.
\end{equation}
Second, for all $y \in I_{j,\,1}\cup I_{j,\,2}$ we also have
\begin{eqnarray*}
  \lf|f^1_j(y)-f^2_j(y)\r|
  \leq \lf|\chi_{I_{j,\,1}}(y)\r| + \lf|\chi_{I_{j,\,2}}(y)\r| + \lf|a_j\chi_{I_j}(y)\r|
  \leq  \frac{5}{2}.
\end{eqnarray*}
Thus
\begin{equation}\label{eq:thm2.20}
   \lf|f_j(y)\r|
   = |I_j|^{-1/p}\lf|f^1_j(y)-f^2_j(y)\r|
   \leq \frac{5}{2}|I_j|^{-1/p}
   \quad \text{for all } y \in I_{j,\,1}\cup I_{j,\,2}.
\end{equation}
So from inequalities~\eqref{eq:thm2.19} and~\eqref{eq:thm2.20}
we obtain the equivalence in~\eqref{fj proper-3}.

Now, to see~\eqref{lower upper lpbdd C comm0} in Lemma~\ref{lem:vmo-contra}, using~\eqref{eq:thm2.10}, \eqref{eq:thm2.4} and~\eqref{fj proper-3} we compute
\[\lf\|f_j\r\|_{\lpz}^p  = \int_{I_{j,\,1}\cup I_{j,\,2}} \lf|f_j(x)\r|^p\,dx + \int_{\R \setminus (I_{j,\,1}\cup I_{j,\,2})} \lf|f_j(x)\r|^p\,dx
\ls 1 + \frac{1}{2^p} \ls 1,\]
as required.

Next, fix a constant $A_1>4$.
Then for any integer $k\ge\lfloor\log_2 A_1\rfloor$, we claim that
\begin{eqnarray}\label{ijk set inclu}
2^{k+1}I_j\subset8I_j^k&=&\lf(x_j-\frac{5}{2}\cdot2^kr_j,\,x_j+\frac{11}{2}\cdot2^kr_j\r)\subset2^{k+3}I_j.
\end{eqnarray}
To see the first inclusion, we recall that $I_j =
I\lf(x_j,r_j\r) = \lf(x_j-r_j, x_j+r_j\r).$ Hence
\begin{eqnarray}
  2^{k+1}I_j &=& I\lf(x_j,2^{k+1}r_j\r)
  = \lf(x_j-2^{k+1}r_j, x_j+2^{k+1}r_j\r),
  \quad\text{and}
  \label{eq:thm2.21} \\
  2^{k+3}I_j &=& I\lf(x_j,2^{k+3}r_j\r)
  = \lf(x_j-4\cdot 2^{k+1}r_j, x_j+4\cdot 2^{k+1}r_j\r).
  \label{eq:thm2.23}
\end{eqnarray}
Also, as defined in Lemma~\ref{lem:vmo-contra},
\[
    I_j^k
    :=\lf(x_j+2^kr_j,\,x_j+2^{k+1}r_j\r)
    = I\lf(x_j + 3\cdot2^{k-1}r_j, 2^{k-1}r_j\r),
\]
and so
\begin{eqnarray}\label{eq:thm2.22}
   8I_j^k 
   &=& \lf(x_j - 5\cdot 2^{k-1}r_j, x_j + 11\cdot 2^{k-1}r_j\r)\noz\\
   &=& \lf(x_j - \frac{5}{4}\cdot2^{k+1}r_j, x_j + \frac{11}{4}\cdot2^{k+1}r_j\r).
\end{eqnarray}
The inclusions in~\eqref{ijk set inclu} follow from equations~\eqref{eq:thm2.21}--\eqref{eq:thm2.22}, since
\[x_j-4\cdot2^{k+}r_j \leq x_j - \frac{5}{4}\cdot2^{k+1}r_j \leq x_j-2^{k+1}r_j, \text{ and}\]
\[x_j+2^{k+1}r_j \leq x_j + \frac{11}{4}\cdot2^{k+1}r_j \leq x_j+4\cdot2^{k+1}r_j.\]

We turn to inequality \eqref{lower upper lpbdd C comm} in Lemma~\ref{lem:vmo-contra}.
Observe that
  \begin{equation}\label{com equiv}
\lf|[b, \Clm]f_j\r|=\bigg|\underbrace{\Clm\lf([b-\al_{I_j}(b)]f_j\r)}_{A(\cdot )}
-\underbrace{\lf[b-\al_{I_j}(b)\r]\Clm(f_j)}_{B(\cdot )}\bigg|.
\end{equation}
Using~\eqref{com equiv} and Minkowski's inequality for the~$L^p(I_j^k)$ norm, we have
\begin{align}\label{eq:thm2.28}
\lf\|\lf[b, \Clm\r]f_j\r\|_{L^p(I_j^k)} &= \lf\|A(\cdot )-B(\cdot )\r\|_{L^p(I_j^k)} \noz \\
&\geq \lf\|A(\cdot )\r\|_{L^p(I_j^k)} - \lf\|B(\cdot )\r\|_{L^p(I_j^k)}.
\end{align}
We will estimate the~$L^p$-norms of~$A$ and~$B$
in~\eqref{eq:thm2.28}.

We start with~$\|B(\cdot)\|_{L^p(I_j^k)}$. Note that
$|z-x_j|<\frac{1}{2}|y-x_j|$ for any~$z\in I_j$
and~$y\in\mathbb R\setminus 2I_j$  .
Also, recall that the kernel~$\Clm(x,y)$ of the Cauchy integral is standard.
Using the fact that~$\supp(f_j)\subset I_j$, equations~\eqref{fj proper-2},~\eqref{eq:thm1.2} and~\eqref{fj proper-3}, we see that for all~$y\in\mathbb R\setminus 2I_j$ and~$z \in I_j$,
 \begin{align}\label{upper bdd riesz ope}
|B(y)|&=\lf|\lf[b(y)-\al_{I_j}(b)\r]\Clm(f_j)(y)\r|\noz \\
&\le\lf|b(y)-\al_{I_j}(b)\r|\int_{I_j}|\Clm(y,z)-\Clm(y,x_j)||f_j(z)|\,dz\noz\\
&\ls\lf|b(y)-\al_{I_j}(b)\r|\int_{I_j} \frac{|x_j-z|}{|x_j-y|^2}|I_j|^{-1/p}\,dz\noz\\
&=\frac{\lf|b(y)-\al_{I_j}(b)\r|}{|I_j|^{1/p}|x_j-y|^2}\int_{I_j} |x_j-z|\,dz\noz\\
&\leq\frac{\lf|b(y)-\al_{I_j}(b)\r|}{|I_j|^{1/p}|x_j-y|^2}\int_{I_j} r_j\,dz\noz\\
&= r_j\lf|I_j\r|^{1/p'}\frac{|b(y)-\al_{I_j}(b)|}{|x_j-y|^2} \noz \\
&= 2^{-1}\lf|I_j\r|^{1+1/p'}\frac{|b(y)-\al_{I_j}(b)|}{|x_j-y|^2}.
\end{align}
Note that~$I_j^k = \lf(x_j+2^kr_j,x_j+2^{k+1}r_j\r) \subset
\lf(\R \setminus 2I_j\r)$ for all~$k \geq \lfloor\log_2
A_1\rfloor$ and $A_1 > 4$. Also, for all~$y \in I_j^k$, we have
\[ |x_j-y| \geq 2^kr_j =2^{k-1}|I_j| .\]
Thus by~\eqref{upper bdd riesz ope} we get
\begin{equation}\label{upper bdd com}
  \lf\|B(\cdot )\r\|_{L^p(I_j^k)} \ls \frac{2^{-1}\lf|I_j\r|^{1+1/p'}}{2^{2(k-1)}|I_j|^2}\lf\|b-\al_{I_j}(b)\r\|_{L^p(I_j^k)}
  = \frac{2\lf|I_j\r|^{-1/p}}{2^{2k}}\lf\|b-\al_{I_j}(b)\r\|_{L^p(I_j^k)}.
\end{equation}
We consider~$\lf\|b-\al_{I_j}(b)\r\|_{L^p(I_j^k)}$. Note that
for all~$k \geq \lfloor\log_2 A_1\rfloor$ and $A_1 > 4$ we have
\[I_j^k = \lf(x_j+2^kr_j,x_j+2^{k+1}r_j\r) \subset \lf(x_j-2^{k+1}r_j, x_j+2^{k+1}r_j\r) = 2^{k+1}I_j.\]
Thus we obtain
\begin{align}\label{eq:thm2.24}
  \lefteqn{\lf\|b-\al_{I_j}(b)\r\|_{L^p(I_j^k)}}\noz\\
  &\leq \lf\|b-\al_{2^{k+1}I_j}(b) + \al_{2^{k+1}I_j}(b)-\al_{I_j}(b)\r\|_{L^p(2^{k+1}I_j)} \noz\\
   &\leq \lf\|b-\al_{2^{k+1}I_j}(b) \r\|_{L^p(2^{k+1}I_j)} + \lf\|\al_{2^{k+1}I_j}(b)-\al_{I_j}(b)\r\|_{L^p(2^{k+1}I_j)}.
\end{align}
For the first term in the last line of~\eqref{eq:thm2.24}, using equation~\eqref{eq:intro2}, for every interval~$I$ we have
\[\int_{I}|b(y)-\al_{I}(b)|^p\,dy \ls |I|\|b\|_{\bmo}^p \ls |I|.\]
Thus the first term in the last line of~\eqref{eq:thm2.24} is
controlled by~$2^{(k+1)/p}|I_j|^{1/p}$:
\begin{equation}\label{eq:thm2.25}
 \lf\|b-\al_{2^{k+1}I_j}(b) \r\|_{L^p(2^{k+1}I_j)} \ls  2^{(k+1)/p}|I_j|^{1/p}.
\end{equation}
For the second term in the last line of~\eqref{eq:thm2.24}, using equation~\eqref{eq:intro2} we have
 \begin{align*}
   |\al_{2^{k+1}I_j}(b)-\al_{I_j}(b)| &= \intav_{I_j}|\al_{2^{k+1}I_j}(b)-\al_{I_j}(b)|\,dy \\
    &\leq  \intav_{2^{k+1}I_j}|\al_{2^{k+1}I_j}(b)) -b(y)|\,dy + \intav_{I_j} |b(y)- \al_{I_j}(b)|\,dy \\
    &\ls \|b\|_{\bmo} = 1.
 \end{align*}
 As a result
\begin{align}\label{eq:thm2.27}
    \lf\|\al_{2^{k+1}I_j}(b)-\al_{I_j}(b)\r\|_{L^p(2^{k+1}I_j)}
    &= \lf|2^{k+1}I_j\r|^{1/p}\lf|\al_{2^{k+1}I_j}(b)-\al_{I_j}(b)\r| \nonumber\\
    &\ls 2^{(k+1)/p}\lf|I_j\r|^{1/p}.
\end{align}
Using~\eqref{eq:thm2.25} and~\eqref{eq:thm2.27} we can estimate
the left-hand side of~\eqref{eq:thm2.24} by
\begin{eqnarray*}
  \lf\|b-\al_{I_j}(b)\r\|_{L^p(I_j^k)}
   &\ls&  2^{(k+1)/p}|I_j|^{1/p} + 2^{(k+1)/p}\lf|I_j\r|^{1/p}
  \ls 2^{(k+1)/p}\lf|I_j\r|^{1/p}.
\end{eqnarray*}
Consequently, we can now estimate~\eqref{upper bdd com}:
\[\lf\|B(\cdot )\r\|_{L^p(I_j^k)}
\ls\frac{2\lf|I_j\r|^{-1/p}}{2^{2k}} 2^{(k+1)/p}\lf|I_j\r|^{1/p}
= C_4 \frac{1}{2^k} \frac{1}{2^{k(p-1)/p}}, \]
where~$C_4 = C_4(p)$ is independent of~$k$ and~$j$.

Next, we will estimate~$\|A(\cdot )\|_{L^p(I_j^k)}$.   
Observe that for all~$y \in I_j^k$ and~$z \in I_j$ we have~$y>z$ and
\[|y-z|\leq |x_j + 2^{k+1}r_j - (x_j - r_j)| = (2^{k+1} + 1)r_j \leq 2^{k+2} r_j.\]
Using \eqref{eq:l1},  \eqref{fj proper-1}, \eqref{fj proper-3},
\eqref{eq:thm2.3} and  \eqref{lower bdd osci}, for all~$y \in I_j^k$ and~$z \in I_j$, we deduce a lower bound for~$|A(y)|$:
 \begin{align*}
|A(y)| 
&=\lf|\int_{(I_{j,\,1}\cup I_{j,\,2})}\Clm(y,z)\lf[b(z)-\al_{I_j}(b)\r]f_j(z)\,dz\r|\\
&=\lf| \int_{(I_{j,\,1}\cup I_{j,\,2})} \frac{1}{z-y + i(A(z) - A(y))} \lf[b(z)-\al_{I_j}(b)\r] f_j(z)\,dz  \r|\\
&=\lf| \int_{(I_{j,\,1}\cup I_{j,\,2})} \frac{z-y}{(z-y)^2 + (A(z) - A(y))^2} \lf[b(z)-\al_{I_j}(b)\r] f_j(z)\,dz \right. \\
&\quad\left. \quad -i \int_{(I_{j,\,1}\cup I_{j,\,2})} \frac{A(z)-A(y)}{(z-y)^2 + (A(z) - A(y))^2} \lf[b(z)-\al_{I_j}(b)\r] f_j(z)\,dz \r|\\
&\geq \lf|\int_{(I_{j,\,1}\cup I_{j,\,2})} \frac{z-y}{(z-y)^2 + (A(z) - A(y))^2} \lf[b(z)-\al_{I_j}(b)\r] f_j(z)\,dz \right| \\
&\geq  \int_{(I_{j,\,1}\cup I_{j,\,2})} \frac{y-z}{(z-y)^2 + L^2(z-y)^2} \lf[b(z)-\al_{I_j}(b)\r] f_j(z)\,dz \\
&\gs  \int_{(I_{j,\,1}\cup I_{j,\,2})} \frac{1}{y-z} \lf[b(z)-\al_{I_j}(b)\r] f_j(z)\,dz  \\
&= \int_{(I_{j,\,1}\cup I_{j,\,2})} \frac{1}{|y-z|} \lf|b(z)-\al_{I_j}(b)\r| \lf|f_j(z)\r|\,dz.  \\
&\gs \int_{(I_{j,\,1}\cup I_{j,\,2})} \frac{1}{|y-z|}\lf|b(z)-\al_{I_j}(b)\r| \lf|I_j\r|^{-1/p} \,dz\\
&\geq\frac{\lf|I_j\r|^{-1/p}}{2^{k+2}r_j} \int_{I_{j}} \lf|b(z)-\al_{I_j}(b)\r| \,dz \\
&\gs\frac{\lf|I_j\r|^{-1/p}}{2^{k+2}r_j} M(b,I_j) |I_j|\\
&> \frac{\e|I_j|^{1-1/p}}{2^{k+2}r_j} = \frac{\e}{2^{k+1}}|I_j|^{-1/p}.
\end{align*}
Consequently,
\[\|A(\cdot )\|_{L^p(I_j^k)} \gs \frac{\e}{2^{k+1}}|I_j|^{-1/p}\lf|I_j^k\r|^{1/p}
= \e2^{\frac{-1-p}{p}} \frac{1}{2^{k(p-1)/p}} 
= C_5 \e \frac{1}{2^{k(p-1)/p}},\]
where~$C_5=C_5(p)$ is independent of~$k$ and~$j$.


Therefore, returning to~\eqref{eq:thm2.28}, we have
\begin{eqnarray*}
   \lf\|\lf[b, \Clm\r]f_j\r\|_{L^p(I_j^k)} 
   &\gs&  C_5 \e \frac{1}{2^{k(p-1)/p}} - C_4\frac{1}{2^{k}} \frac{1}{2^{k(p-1)/p}}\\
   &=&  \lf( C_5 \e  - C_4\frac{1}{2^{k}}\r)  \frac{1}{2^{k(p-1)/p}}.
\end{eqnarray*}
Take $A_1$ large enough that for any integer $k\ge \lfloor\log_2 A_1\rfloor$,
\[ C_5 \e  - C_4\frac{1}{2^{k}} \geq C_5 \frac{\e}{2}.\]
Then for all such~$k$ we have
\[ \lf\|\lf[b, \Clm\r]f_j\r\|_{L^p(I_j^k)} \gs C_5 \frac{\e}{2}  \frac{1}{2^{k(p-1)/p}}.\]
It follows that
\[ \int_{I_j^k}\lf|\lf[b, \Clm\r]f_j(y)\r|^p\,dy \gs C_5^p\frac{\e^p}{2^p}\frac{1}{2^{k(p-1)}}
 = \widetilde{C}_1\e^p\frac{1}{2^{k(p-1)}}.\]
This shows the  inequality \eqref{lower upper lpbdd C comm}.

Finally we show the inequality \eqref{lower upper lpbdd C comm2} in Lemma~\ref{lem:vmo-contra}.
Using equation~\eqref{com equiv} we have
\begin{eqnarray}\label{eq:thm2.47}
  \lf\|[b, \Clm]f_j\r\|_{L^p(2^{k+1}I_j\setminus 2^kI_j)} &=&  \lf\|A(\cdot )
  -B(\cdot )\r\|_{L^p(2^{k+1}I_j\setminus 2^kI_j)} \noz \\
   &\leq&  \lf\|A(\cdot )\r\|_{L^p(2^{k+1}I_j\setminus 2^kI_j)} + \lf\|B(\cdot)\r\|_{L^p(2^{k+1}I_j\setminus 2^kI_j)}.
\end{eqnarray}
Consider the term~$ \lf\|A(\cdot )\r\|_{L^p(2^{k+1}I_j\setminus
2^kI_j)}$ in~\eqref{eq:thm2.47}. Note that for all~$z\in I_j$
and $y \in \R \setminus 2^k I_j$, we have
\[|y-z| \geq |x_j+2^kr_j - x_j -r_j| = (2^k-1)r_j.\]
Using the fact that $\supp(f_j)\subset I_j$, together with  \eqref{eq:thm1.1}, \eqref{fj proper-3} and~\eqref{eq:intro2},
we deduce that for all~$z\in I_j$ and $y\in \R\setminus 2^kI_j$,
\begin{eqnarray}\label{eq:thm2.46}
  |A(y)|
&\leq&\int_{I_j}|\Clm(y,z)|\lf|b(z)-\al_{I_j}(b)\r|\lf|f_j(z)\r|\,dz \noz\\
  &\ls& \int_{I_j} \frac{1}{|y-z|}\lf|b(z)-\al_{I_j}(b)\r| \lf|I_j\r|^{-1/p} \,dz \noz\\
    &\leq& \frac{\lf|I_j\r|^{-1/p}}{(2^k-1)r_j} \int_{I_j}\lf|b(z)-\al_{I_j}(b)\r|  \,dz  \noz\\
  &\ls& \frac{\lf|I_j\r|^{-1/p}}{(2^k-1)r_j} \|b\|_{\bmo}|I_j| \noz\\
   &\ls&  \frac{|I_j|^{-1/p}}{2^{k-2}}.
\end{eqnarray}
The upper bound for~$|A(y)|$ in~\eqref{eq:thm2.46} gives us
\begin{eqnarray}\label{eq:thm2.48}
  \lf\|A(\cdot )\r\|_{L^p(2^{k+1}I_j\setminus 2^kI_j)}
  &\ls& \frac{|I_j|^{-1/p}}{2^{k-2}} |2^{k+1}I_j|^{1/p}
  \ls 2^{1/p} \frac{1}{2^{k(p-1)/p}} \noz \\
  &=& C_6  \frac{1}{2^{k(p-1)/p}},
\end{eqnarray}
where~$C_6=C_6(p)$ is independent of~$k$ and~$j$.

Consider now the term~$ \lf\|B(\cdot )\r\|_{L^p(2^{k+1}I_j\setminus
2^kI_j)}$ in~\eqref{eq:thm2.47}. Following the same argument
for estimating~$ \lf\|B(\cdot )\r\|_{L^p(I_j^k)}$ above, we
obtain
\begin{equation}\label{eq:thm2.49}
   \lf\|B(\cdot )\r\|_{L^p(2^{k+1}I_j\setminus 2^kI_j)} \ls C_7 \frac{1}{2^k} \frac{1}{2^{k(p-1)/p}},
\end{equation}
where~$C_7=C_7(p)$ is independent of~$k$ and~$j$.
Using~\eqref{eq:thm2.48} and~\eqref{eq:thm2.49}, for all $k\ge\lfloor \log_2A_1\rfloor$, we have
\[\lf\|[b, \Clm]f_j\r\|_{L^p(2^{k+1}I_j\setminus 2^kI_j)} \ls \lf(C_6  + C_7 \frac{1}{2^k}\r)\frac{1}{2^{k(p-1)/p}}.\]
It follows that
\[\int_{2^{k+1}I_j\setminus 2^kI_j}\lf|[b, \Clm]f_j(y)\r|^p\,dy \leq \widetilde{C}_2\frac{1}{2^{k(p-1)}}, \]
which is~\eqref{lower upper lpbdd C comm2}, as required.
This completes the proof of Lemma~\ref{lem:vmo-contra}. \hfill\(\Box\)


With Lemma~\ref{lem:vmo-contra} in hand, we now return to the proof of our main result.

\noindent\emph{Proof of Theorem~\ref{thm2}.}
\emph{Sufficiency:}
We first show that if $[b, \Clm]$ is a compact operator on $\lpz$, then $b\in \vmo$.
Since $[b, \Clm]$ is compact on $\lpz$, $[b, \Clm]$ is bounded on $\lpz$.
Without loss of generality, we may assume that $\|b\|_{\bmo(\R)}=1$.
To show $b\in\vmo$, we use a contradiction argument via Definition~\ref{def:VMODafni}.
Observe that if $b\notin \vmo$, $b$ does not satisfy at least one of conditions~(1)--(3) in Definition~\ref{def:VMODafni}.
We consider the three cases separately.

{\sc Case 1:} Suppose $b$ does not satisfy condition~(1) in Definition~\ref{def:VMODafni}, that is,
\[\lim_{\dz \to 0} \sup_{I,|I|<\dz} \intav_I |f(x) - f_I| \,dx \neq 0.\]
 Then there exist $\e >0$
and a sequence $\{I_j\}_{j=1}^\fz$ of intervals satisfying
\[M(b,I_j) > \e \quad \text{for each }j\]
and~$|I_j| \to 0$ as~$j \to \fz$.
Let $f_j$, $\wz C_1$, $\wz C_2$, $A_1$ be as in Lemma \ref{lem:vmo-contra} and let $A_2>A_1$ be a large number to be chosen later.
Since $\lf|I_j\r|\to 0$ as $j\to\fz$, we may choose a subsequence $\{I_{j_\ell}^{(1)}\}$ of $\{I_j\}$ such that
\begin{equation}\label{descreasing interval}
\frac{\lf|I_{j_{\ell+1}}^{(1)}\r|}{\lf|I_{j_{\ell}}^{(1)}\r|}<\frac{1}{A_2} \qquad \text{for all } l\in \mathbb N.
\end{equation}
For fixed $\ell$, $m\in \mathbb N$, denote
\begin{eqnarray*}
  \mathcal J &:=& \lf(x_{j_\ell}^{(1)}+A_1r_{j_\ell}^{(1)}, x_{j_\ell}^{(1)}+A_2r_{j_\ell}^{(1)}\r), \\
  \mathcal J_1 &:=& \mathcal J\setminus \lf\{y\in\R: \lf|y-x_{j_{\ell+m}}^{(1)}\r|\le A_2r_{j_{\ell+m}}^{(1)}\r\}, \quad \text{ and}\\
  \mathcal J_2 &:=& \lf\{y\in\R: \lf|y-x_{j_{\ell+m}}^{(1)}\r|>A_2r_{j_{\ell+m}}^{(1)}\r\}.
\end{eqnarray*}
Note that
$$\mathcal J_1\subset\lf\{y\in\R: \lf|y-x_{j_{\ell}}^{(1)}\r|\le A_2r_{j_{\ell}}^{(1)}\r\}\cap\mathcal J_2
\,\,{\rm and}\,\, \mathcal J_1= \mathcal J \cap \mathcal J_2 = \mathcal J\setminus(\mathcal J\setminus\mathcal J_2).$$
 We then have
\begin{eqnarray}\label{low lpbdd comparing com}
\lefteqn{\|\lf[b,\Clm\r](f_{j_\ell})-\lf[b,\Clm\r](f_{j_{\ell+m}})\|_{L^p(\R)}}\noz\\
&\ge&|\lf[b,\Clm\r](f_{j_\ell})-\lf[b,\Clm\r](f_{j_{\ell+m}})\|_{L^p(\mathcal J_1)}\noz\\
&\ge&\|\lf[b,\Clm\r](f_{j_\ell})\|_{L^p(\mathcal J_1)}
-\|\lf[b,\Clm\r](f_{j_{\ell+m}})\|_{L^p(\mathcal J_1)}\noz\\
&\ge&\|\lf[b,\Clm\r](f_{j_\ell})\|_{L^p(\mathcal J_1)}
-\|\lf[b,\Clm\r](f_{j_{\ell+m}})\|_{L^p(\mathcal J_2)}\noz\\
&=&\lf(\int_{\mathcal J\setminus(\mathcal J\setminus\mathcal J_2)}\lf|\lf[b,\Clm\r](f_{j_\ell})(y)\r|^p\,dy\r)^{1/p}
-\lf(\int_{\mathcal J_2}\lf|\lf[b,\Clm\r](f_{j_{\ell+m}})(y)\r|^p\,dy\r)^{1/p}\noz\\
&=:&{\rm F_1}-{\rm F_2}. \noz
\end{eqnarray}
We first consider the term ${\rm F_1}$. To begin with, we estimate the measure of the set~$E_{j_\ell}:=\mathcal J\setminus\mathcal J_2$.  Assume that $E_{j_\ell}\not=\emptyset$.
Then $E_{j_\ell}\subset A_2I^{(1)}_{j_{\ell+m}}$.
Hence, we have
\begin{eqnarray}\label{ee1}
\lf|E_{j_\ell}\r|\le \lf|A_2I^{(1)}_{j_{\ell+m}}\r| = A_2\lf|I^{(1)}_{j_{\ell+m}}\r|<\lf|I^{(1)}_{j_{\ell}}\r|,
\end{eqnarray}
where the last inequality follows from \eqref{descreasing interval}.

Now for each~$k \geq \lfloor\log_2 A_1\rfloor$, as in Lemma~\ref{lem:vmo-contra} let
$$I_{j_\ell}^{k}:=\lf(x^{(1)}_{j_\ell}+2^kr^{(1)}_{j_\ell},\,x^{(1)}_{j_\ell}+2^{k+1}r^{(1)}_{j_\ell}\r).$$
Then
\begin{eqnarray}\label{ee2}
 \lf|I_{j_\ell}^{k}\r| &=& 2^kr^{(1)}_{j_\ell} =2^k\frac{\lf|I_{j_\ell}^{(1)}\r|}{2} \\
  &=& 2^{k-1}\lf|I_{j_\ell}^{(1)}\r| \geq  \lf|I_{j_\ell}^{(1)}\r| > \lf|E_{j_\ell}\r| \quad \text{for all } k \geq \lfloor\log_2 A_1\rfloor. \noz
\end{eqnarray}
Notice also that by definition,
\begin{equation}\label{ee3}
  E_{j_\ell} \subset \mathcal{J} \subset \bigcup_{k =  \lfloor\log_2 A_1\rfloor}^{\infty} I_{j_\ell}^{k}.
\end{equation}
Here  the second inclusion holds because the left endpoint of~$I_{j_\ell}^{\lfloor\log_2 A_1\rfloor}$ is~$x^{(1)}_{j_\ell}+2^{ \lfloor\log_2 A_1\rfloor}r^{(1)}_{j_\ell}$, which lies to the left of the left endpoint of~$\mathcal J$.

From inequality~\eqref{ee2} and the fact~\eqref{ee3}, it follows that~$E_{j_\ell}$ is covered by the union of at most two (adjacent) intervals~$I_{j_\ell}^{k}$.
That is, there is some~$k_0 \geq \lfloor\log_2 A_1\rfloor$ such that
$E_{j_\ell}\subset (I_{j_\ell}^{k_0}\cup I_{j_\ell}^{k_0+1})$.
By inequality~\eqref{lower upper lpbdd C comm} in Lemma~\ref{lem:vmo-contra},
\begin{eqnarray}\label{eq:thm2.29}
{\rm F}_1^p 
&&\ge\sum_{k=\lfloor\log_2 A_1\rfloor+1,\,k\not=k_0,\,k_0+1}^{\lfloor\log_2 A_2\rfloor}
\int_{I_{j_\ell}^k}\lf|\lf[b,\Clm\r](f_{j_\ell})(y)\r|^p\,dy\noz\\
&&\ge\wz C_1\e^p\sum_{k=\lfloor\log_2 A_1\rfloor+1,\,k\not=k_0,\,k_0+1}^{\lfloor\log_2 A_2\rfloor}
\frac{1}{2^{k(p-1)}}\noz\\
&&\ge\wz C_1\e^p\sum_{k=\lfloor\log_2 A_1\rfloor+3}^{\lfloor\log_2 A_2\rfloor}\frac{1}{2^{k(p-1)}}\noz\\
&&\ge 8^{(1-p)}\wz C_1\e^pA_1^{(1-p)}=:A_3.
\end{eqnarray}
If  $E_{j_\ell}:=\mathcal J\setminus\mathcal J_2=\emptyset$, then inequality~\eqref{eq:thm2.29} still holds.

On the other hand, using \eqref{lower upper lpbdd C comm2} in Lemma~\ref{lem:vmo-contra}, we deduce that
\begin{eqnarray*}
{\rm F}_2^p 
&&\le\sum_{k=\lfloor\log_2 A_2\rfloor}^{\fz}
\int_{2^{k+1}I_{j_{\ell+m}}^{(1)} \setminus 2^{k}I_{j_{\ell+m}}^{(1)}}\lf|\lf[b,\Clm\r](f_{j_{\ell+m}})(y)\r|^p dy\\
&&\le\wz C_2\sum_{k=\lfloor\log_2 A_2\rfloor}^\fz\frac{1}{2^{k(p-1)}}\\
&&\le \wz C_2 \frac{1/2^{p-1}}{1- 1/2^{p-1}} \\
&&\le \frac{\wz C_2}{1- 2^{1-p}}\frac{1}{2^{\lfloor\log_2 A_2\rfloor(p-1)}}.
\end{eqnarray*}
If we choose~$A_2 > A_1$ large enough such that
\begin{equation}\label{eq:thm2.45}
A_3:=8^{(1-p)}\wz C_1\e^pA_1^{(1-p)}>
\frac{2\wz C_2}{1-2^{1-p}}\frac1{2^{\lfloor\log_2 A_2\rfloor(p-1)}},
\end{equation}
then we have
\begin{equation}\label{eq:thm2.30}
  {\rm F}_2^p \leq \frac{A_3}{2}.
\end{equation}
By inequalities~\eqref{eq:thm2.29} and~\eqref{eq:thm2.30}, we get
\begin{eqnarray*}
&&\|\lf[b,\Clm\r](f_{j_\ell})-\lf[b,\Clm\r](f_{j_{\ell+m}})\|_{L^p(\mathbb R)}\gs A_3^{1/p} >0.
\end{eqnarray*}
Thus, $\{[b,\Clm]f_j\}_{j}$ is not relatively compact in $L^p(\R)$, which implies that
$[b, \Clm]$ is not compact on $\lpz$. This contradiction implies that, $b$ satisfies condition (1) in Definition~\ref{def:VMODafni}.

 {\sc Case 2:} Suppose $b$ violates condition~(2) in Definition~\ref{def:VMODafni}, that is,
\[\lim_{R\rightarrow\infty}\sup_{I,|I|>R}\frac{1}{|I|}\int_I|f(x)-f_I|\,dx\neq 0.\]
 In this case, there exist
$\e >0$ and a sequence $\{I_j\}$ of intervals satisfying~$M(b,I_j) > \e$
and that $|I_j|\rightarrow\infty$ as $j\rightarrow\infty$. We take a subsequence $\{I_{j_\ell}^{(2)}\}$ of $\{I_j\}$ such that
\begin{equation}\label{increasing interval}
\frac{\lf|I_{j_{\ell}}^{(2)}\r|}{\lf|I_{j_{\ell+1}}^{(2)}\r|}<\frac1{A_2} \qquad \text{for all } l\in \mathbb N,
\end{equation}
where~$A_2$ is chosen as in Case~1 above. We use a similar method to that in the previous case, but redefine our sets with the roles of~$j_{\ell}$ and~$j_{\ell+m}$ reversed. That is, for fixed $\ell$ and $m$, let
\begin{eqnarray*}
  \widetilde{\mathcal J} &:=& \lf(x_{j_{\ell+m}}^{(2)}+A_1r_{j_{\ell+m}}^{(2)}, x_{j_{\ell+m}}^{(2)}+A_2r_{j_{\ell+m}}^{(2)}\r), \\
  \widetilde{\mathcal J}_1 &:=& \widetilde{\mathcal J}\setminus \lf\{y\in\mathbb R: \lf|y-x_{j_{\ell}}^{(2)}\r|\le A_2r_{j_{\ell}}^{(2)}\r\}, \quad \text{ and}\\
  \widetilde{\mathcal J}_2 &:=& \lf\{y\in\mathbb R: \lf|y-x_{j_{\ell}}^{(2)}\r|>A_2r_{j_{\ell}}^{(2)}\r\}.
\end{eqnarray*}
Then we have that
$$\widetilde{\mathcal J}_1\subset\lf\{y\in\mathbb R: \lf|y-x_{j_{\ell+m}}^{(2)}\r|\le A_2r_{j_{\ell+m}}^{(2)}\r\}\cap\widetilde{\mathcal J}_2
\,\, {\rm and}\,\,
\widetilde{\mathcal J}_1=\widetilde{\mathcal J} \cap \widetilde{\mathcal J}_2 = \widetilde{\mathcal J}\setminus\lf(\widetilde{\mathcal J}\setminus\widetilde{\mathcal J}_2\r).$$
Consequently,
\begin{eqnarray}\label{low lpbdd comparing com 2}
\lefteqn{\|\lf[b,\Clm\r](f_{j_{\ell+m}}) - \lf[b,\Clm\r](f_{j_\ell})\|_{L^p(\R)}}\noz\\
&\geq&\lf(\int_{\widetilde{\mathcal J}\setminus(\widetilde{\mathcal J}\setminus\widetilde{\mathcal J}_2)}\lf|\lf[b,\Clm\r](f_{j_{\ell+m}})(y)\r|^p\,dy\r)^{1/p}
-\lf(\int_{\widetilde{\mathcal J}_2}\lf|\lf[b,\Clm\r](f_{j_{\ell}})(y)\r|^p\,dy\r)^{1/p}\noz\\
&=:&{\widetilde{\rm F}_1}-{\widetilde{\rm F}_2}. \noz
\end{eqnarray}
By inequalities~\eqref{lower upper lpbdd C comm} and~\eqref{lower upper lpbdd C comm2}  in Lemma~\ref{lem:vmo-contra} and the definition of~$A_3$ in~\eqref{eq:thm2.45}, we can deduce that~$ \widetilde{\rm F}^p_1 \geq A_3$ and~$\widetilde{\rm F}^p_2 \leq A_3/2$, just as~${\rm F}_1^p$ and ~${\rm F}^p_2$  in Case~1.
As a consequence,
\begin{eqnarray*}
&&\|\lf[b,\Clm\r](f_{j_{\ell+m}})-\lf[b,\Clm\r](f_{j_{\ell}})\|_{L^p(\mathbb R)}\gs(A_3)^{1/p}.
\end{eqnarray*}
As in Case 1, by Lemma \ref{lem:vmo-contra} and inequality~\eqref{increasing interval},
we see that $[b, \Clm]$ is not compact on $\lpz$. This contradiction implies that
$b$ satisfies condition~(2) of Definition~\ref{def:VMODafni}.

{ \sc Case 3:} By Cases 1 and 2, we may assume that conditions~(1) and~(2) in Definition~\ref{def:VMODafni} hold for~$b$. Suppose  condition (3) in Definition~\ref{def:VMODafni} fails, that is,
 \[\lim_{R\rightarrow\infty}\sup_{I,I\cap I(0,R)=\emptyset}\frac{1}{|I|}\int_I|f(x)-f_I|\,dx \neq 0.\]
 Then there exist $\e>0$ such that for each $R>0$, there exists an interval $I$ such that~$I \cap (-R,R) = \emptyset$ with $M(b,\,I)>\e$.
We claim that for the $\e$ above, there exists a sequence $\{I_j\}_j$ of intervals such that for all~$j \in \mathbb N $,
\begin{equation}\label{lower bdd osci-2}
M(b,\,I_j)>\e,
\end{equation}
and that for all $\ell\neq m$, and for the constant~$A_2$ chosen in Case 1 above,
\begin{equation}\label{I-j-pro}
A_2I_{\ell}\cap A_2I_m=\emptyset.
\end{equation}
To see this, first note that as $b$ satisfies condition~(2) in Definition~\ref{def:VMODafni}, for the aforementioned~$\e$ there exists a constant $\widetilde{C}_\e$ such that $$M(b,\,I)<\e$$
for every interval $I$ satisfying $|I|>\widetilde{C}_\e$.
Let $C_\e := \widetilde{C}_\e/2$. Then for $R_1>C_\e$,
there exists an interval $I_1:=I(x_1,\,r_1)\subset \R \backslash I(0,R_1)$ such that \eqref{lower bdd osci-2} holds. Similarly, for $R_j:=\lf|x_{j-1}\r|+4A_2C_\e$, $j=2, 3, \ldots$ \,, there exists $I_j:=I(x_j,\,r_j) \subset \R \backslash I(0,R_j)$ satisfying \eqref{lower bdd osci-2}.
Repeating this procedure, we obtain a collection $\{I_j\}_j$ of intervals satisfying \eqref{lower bdd osci-2} for each $j$.

By the choice of $\{I_j\}$, namely $M(b,I_j)>\e$, we have that~$|I_j| \leq \widetilde{C}_\e$, and so~$r_j \leq \widetilde{C}_\e/2 =C_{\e}$ for all $j \in \mathbb{N}$.
Thus
\[A_2r_j < A_2C_\e < 4A_2C_\e.\]
Therefore for all~$\ell \neq m$ we have
 \begin{eqnarray*}
 d(A_2I_{\ell},\,A_2I_m) &\geq&  R_j - (x_{j-1}+A_2r_{j-1})= x_{j-1}+4A_2C_\delta - x_{j-1} - A_2r_{j-1} \\
    &\geq& 4A_2C_\delta - A_2C_\delta  = 3A_2C_\delta.
 \end{eqnarray*}
This establishes the claim.

Now we define
\begin{eqnarray*}
  \widehat{\mathcal J}_1 &:=& \lf(x_{\ell}+A_1r_{\ell}, x_{\ell}+A_2r_{\ell}\r), \quad \text{ and} \\
  \widehat{\mathcal J}_2 &:=& \lf\{y\in\mathbb R: \lf|y-x_{\ell+m}\r|> A_2r_{\ell+m}\r\}.
\end{eqnarray*}
Note that $\widehat{\mathcal J}_1\subset\widehat{\mathcal J}_2$.
Thus, similarly to the estimates of ${\rm F_1}$ and ${\rm F_2}$ in Case~1, for all $\ell$, $m \in \mathbb{N}$, we get
\begin{eqnarray*}
&&\|\lf[b,\Clm\r](f_{\ell})-\lf[b,\Clm\r](f_{\ell+m})\|_{L^p(\mathbb R)}\\
&&\quad\ge\lf\{\int_{\widehat{\mathcal J}_1}\lf|\lf[b,\Clm\r](f_{\ell})(y)-\lf[b,\Clm\r](f_{\ell+m})(y)\r|^p\,dy\r\}^{1/p}\\
&&\quad\ge\lf\{\int_{\widehat{\mathcal J}_1}\lf|\lf[b,\Clm\r](f_{\ell})(y)\r|^p\,dy\r\}^{1/p}-
\lf\{\int_{\widehat{\mathcal J}_2}\lf|\lf[b,\Clm\r](f_{\ell+m})(y)\r|^p\,dy\r\}^{1/p}\\
&&\quad=:{\widehat{\rm F}_1}-{\widehat{\rm F}_2}.
\end{eqnarray*}
Again, by~\eqref{lower upper lpbdd C comm} and~\eqref{lower upper lpbdd C comm2}  in Lemma~\ref{lem:vmo-contra} and the definition of~$A_3$ in~\eqref{eq:thm2.45}, we deduce that~$ \widehat{\rm F}^p_1 \geq A_3$ and~$\widehat{\rm F}^p_2 \leq A_3/2$, as for~${\rm F}_1^p$ and ~${\rm F}^p_2$  in Case~1.
As a result, we get
\begin{eqnarray*}
&&\|\lf[b,\Clm\r](f_{\ell})-\lf[b,\Clm\r](f_{\ell+m})\|_{L^p(\mathbb R)}\gs(A_3)^{1/p}.
\end{eqnarray*}
Thus, $\{[b,\Clm]f_\ell\}_{\ell}$ is not relatively compact in $L^p(\R)$, which implies that
$[b, \Clm]$ is not compact on $\lpz$.
This contradicts the compactness of $[b, \Clm]$ on $L^p(\mathbb R)$,
so $b$ satisfies condition (3)
in Definition~\ref{def:VMODafni}.
This completes the proof of the sufficiency in Theorem~\ref{thm2}.

\textbf{Necessity:}
To see the converse,
we must show that when $b\in\vmo(\R)$, the commutator $[b, \Clm]$ is compact on $\lpz$.
By a density argument, it suffices to show that $[b, \Clm]$ is a compact operator for $b\in C_c^{\infty}(\R)$.

Let $b\in C_c^{\infty}(\R)$. To show $[b,\Clm]$ is compact on $\lpz$, it suffices to show that for every bounded subset $E\subset\lpz$, the set
$[b,\Clm]E$ is precompact. Thus, we only need to show that $[b,\Clm]E$ satisfies the
hypotheses (a)--(c) in the Frech\'{e}t--Kolmogorov theorem (Theorem \ref{t-fre kol}). We first point out that by Theorem~\ref{thm1}
and the fact that $b\in \bmo(\R)$, $[b,\Clm]$ is bounded on $\lpz$, which implies that $[b,\Clm]E$
satisfies hypothesis (a) in Theorem \ref{t-fre kol}.

Next, we show that~$[b,\Clm]E$ satisfies hypothesis (b) in Theorem \ref{t-fre kol}.
We may assume that~$b\in C_c^{\infty}(\R)$ with~$\supp b \subset I(0,R)$.
For~$t>2$, set~$K^c := \{x \in \R: |x| > tR\}.$
Then
\begin{eqnarray*}
  \|[b,\Clm]f(x)\|_{L^p(K^c)} 
   &=&  \|b\Clm(f)(x) -  \Clm(bf)(x)\|_{L^p(K^c)}\\
   &\leq&  \|b\Clm(f)(x)\|_{L^p(K^c)} +  \|\Clm(bf)(x)\|_{L^p(K^c)}.
\end{eqnarray*}
Since~$\supp b\cap K^c = \emptyset$,  we have
\[ \int_{|x|>tR}\lf|b\Clm(f)(x)\r|^p \,dx  = 0,\]
and so
\begin{equation}\label{eq:thm2.40}
\|[b,\Clm]f(x)\|_{L^p(K^c)} \leq \|\Clm(bf)(x)\|_{L^p(K^c)}.
\end{equation}
Using equation~\eqref{eq:thm1.1} and the fact that~$\supp b \subset I(0,R)$ we have
\begin{align}\label{eq:thm2.41}
  \lf|\Clm(bf)(x)\r| 
   &\leq  \int_{|y|<R} |\Clm(x,y)|| b(y)||f(y)| \,dy\noz\\
   &\leq   \int_{|y|<R} \frac{1}{|x-y|}| b(y)||f(y)| \,dy.
\end{align}
Notice that for~$|x|>tR$, $t >2$ and~$|y|<R$ we have~$|x-y|>|x|/2.$
In view of this fact and H\"{o}lder's inequality, inequality~\eqref{eq:thm2.41} yields
\begin{align*}
  \lf|\Clm(bf)(x)\r| 
   &\leq \frac{2}{|x|} \int_{|y|<R} | b(y)||f(y)| \,dy \\
   &\leq  \frac{2}{|x|} \lf(\int_{|y|<R} | b(y)|^{p'} \,dy\r)^{1/p'} \lf(\int_{|y|<R} | f(y)|^{p} \,dy\r)^{1/p}  \\
   &\leq \frac{2}{|x|} \|b\|_{L^{\infty}(\R)}\|f\|_{L^{p}(\R)} (2R)^{1/p'}  \\
   &=  2^{1+ 1/p'} \|b\|_{L^{\infty}(\R)}\|f\|_{L^{p}(\R)} R^{1/p'}  \frac{1 }{|x|},
\end{align*}
since~$b \in C_c^{\infty}(\R)$. With this estimate of~$\lf|\Clm(bf)(x)\r|$, inequality~\eqref{eq:thm2.40} becomes
\begin{align*}
  \|[b,\Clm]f(x)\|_{L^p(K^c)} &\leq 2^{1+ 1/p'} \|b\|_{L^{\infty}(\R)}\|f\|_{L^{p}(\R)} R^{1/p'} \lf( \int_{|x|>tR} \frac{1}{|x|^p} \,dx\r)^{1/p}\\
   &= \frac{2^{2+1/p'}}{(p-1)^{1/p}} \|b\|_{L^{\infty}(\R)}\|f\|_{L^{p}(\R)}  \frac{1}{t^{1/p'}} = C \frac{1}{t^{1/p'}}.
\end{align*}
Finally, given each~$\e > 0$, we can choose~$t$ large enough such that~$Ct^{-1/p'} < \e$. Here the constant~$C$ depends on~$b$ and on the bound on~$\|f\|_{L^{p}(\R)}$ for~$f \in E$.
Hence hypothesis (b)  in Theorem \ref{t-fre kol} holds for $[b,\Clm]E$.

It remains to prove that $[b,\Clm]E$ also satisfies hypothesis (c) of Theorem \ref{t-fre kol}.
Let $\e$ be a fixed positive constant in $(0,1/2)$.
Since~$b \in C_c^{\infty}(\R)$, it is uniformly continuous.
Choose~$z_0 = z_0(b,\e)$ sufficiently small that for all~$z \in (0,z_0)$, we have both~$|z|<\e^2$ and for all~$x\in\R$, $|b(x) - b(x+z)| < \e$. Fix~$z \in (0,z_0)$.
Then for all~$x\in\R$,
\begin{eqnarray*}
&&[b, \Clm]f(x)-[b, \Clm]f(x+z)\\
&&\quad=\int_{\R} \Clm(x, y)[b(x)-b(y)] f(y)\,dy\\
&&\quad\quad-\int_{\R} \Clm(x+z, y)[b(x+z)-b(y)] f(y)\,dy\\
&&\quad=\int_{|x-y|>\e^{-1} |z|}\Clm(x, y)[b(x)-b(x+z)]f(y)\,dy\\
&&\quad\quad+\int_{|x-y|>\e^{-1} |z|}[\Clm(x, y)-\Clm(x+z,y)][b(x+z)-b(y)]f(y)\,dy\\
&&\quad\quad+\int_{|x-y|\le\e^{-1} |z|}\Clm(x,y)[b(x)-b(y)]f(y)\,dy\\
&&\quad\quad-\int_{|x-y|\le\e^{-1}|z|}\Clm(x+z,y)[b(x+z)-b(y)]f(y)\,dy\\
&&\quad=:\sum_{j=1}^4{\rm L}_i.
\end{eqnarray*}

We start with~${\rm L}_2$. Since $\e\in(0, 1/2)$, it follows that
\[|x-y|>\e^{-1}|z| \Rightarrow |(x+z)-x|  < \frac{|x-y|}{2}.\]
Thus we may apply the smoothness condition of the kernel~$\Clm(x,y)$, concluding that
 \[ |\Clm(x,y) - \Clm(x+z,y)| \leq \frac{2(L+1)|x+z-x|}{|y-x |^2} = \frac{2(L+1)|z|}{|y-x |^2}.\]
Using this inequality together with the fact that~$b\in C_c^{\infty}(\R)$,  we get
\begin{align*}
  |{\rm L}_2| 
   \ls |z| \int_{|x-y|>\e^{-1} |z|} \frac{|f(y)|}{|y-x |^2} \,dy.
\end{align*}
From this and H\"older's inequality, we have
\begin{align}\label{eq:thm2.36}
\int_{\R}|{\rm L_2}|^p\,dx 
&\ls|z|^p\int_{\R}\lf[ \int_{|x-y|>\e^{-1} |z|} \frac{1}{|y-x |^{2/p'}} \frac{|f(y)|}{|y-x |^{2/p}}   \,dy\r]^p\,dx\nonumber\\
&=|z|^p\int_{\R}\lf\{\int_{|x-y|>\e^{-1} |z|}\frac{1}{|x-y|^2}\,dy\r\}^{p/p'}
\int_{|x-y|>\e^{-1} |z|}\frac{|f(y)|^p}{|x-y|^2}\,dy\,\,dx \nonumber\\
&\ls|z|^p\int_{\R}\lf(\e|z|^{-1} \r)^{p/p'}\int_{|x-y|>\e^{-1} z}\frac{|f(y)|^p}{|x-y|^2}\,dy\,\,dx \noz \\
&\ls|z|^p \lf(\e|z|^{-1} \r)^{p/p'}\int_{\R} \e|z|^{-1} |f(y)|^p \,dy \noz\\
&=|z|^p \lf(\e|z|^{-1} \r)^{p}\|f\|^p_{L^p(\R)} \noz\\
&=\e^p\|f\|^p_{L^p(\R)}.
\end{align}

Turning to ${\rm L}_3$, by~\eqref{eq:thm1.1}, the fact that $b\in C_c^{\infty}(\R)$ and the Mean Value Theorem, we conclude that
\begin{eqnarray*}
  |{\rm L}_3| 
   \ls \int_{|x-y|\le\e^{-1} |z|} |f(y)|\,dy. \\
\end{eqnarray*}
Then using H\"{o}lder's inequality as for~$\rm{L}_2$ we see that
\begin{align}\label{eq:thm2.37}
\int_{\R}|{\rm L_3}|^p\,dx &\ls  \int_{\R} \lf[\int_{|x-y|\le\e^{-1} |z|} |f(y)|\,dy \r]^p \,dx \noz\\
&\ls\int_{\R}\lf\{\lf[\int_{|x-y|\le\e^{-1} |z|}\,dy\r]^{p/p'}\int_{|x-y|\le\e^{-1} |z|}|f(y)|^p\,dy\r\}\,dx\noz\\
&\ls(\e^{-1} |z|)^{p}\|f\|^p_\lpz \noz\\
&< \e^p\|f\|^p_{L^p(\R)}
\end{align}
by our choice of~$z$.
Similarly, we obtain the same estimate for $\rm{L}_4$:
\begin{equation}\label{eq:thm2.38}
  \int_{\R}|{\rm L_4}|^p\,dx \ls \e^{p}\|f\|^p_\lpz.
\end{equation}

Lastly, we consider $\rm{L}_1$:
\begin{align*}
  |\rm L_1| 
   &\leq |b(x)-b(x+z)|\sup_{t>0}\lf|\int_{|x-y|>t}\Clm(x, y)f(y)\,dy\r| \\
   &=: |b(x)-b(x+z)| C_{\Gamma}^{\ast} f(x).
\end{align*}
Here we will use the following standard result.
\begin{thm}(\textbf{\cite[Theorem 5.14, p.102]{Duo01}})
If~$T$ is a Calder\'{on}--Zygmund operator, then~$T^*$ is weak (1,1) and strong (p,p) for all~$p \in (1,\infty)$. $T^*$ is defined by
\[T^*f(x) := \sup_{t>0} \lf|\int_{|x-y|>t}K(x,y) f(y) \,dy\r|.\]
\end{thm}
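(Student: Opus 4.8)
The plan is to obtain the stronger \emph{Cotlar pointwise inequality} for the maximal truncated operator $T^{\ast}$ and then read off both assertions from it, using only the classical Calder\'on--Zygmund theory for $T$ itself together with the mapping properties of the Hardy--Littlewood maximal operator, which we denote by $\mathcal{M}$. First recall what is available. Since $T$ is a Calder\'on--Zygmund operator it is bounded on $L^2(\mathbb R)$ by definition, and it is associated to a standard kernel $K$ as in Definition~\ref{CZR}; hence, by the Calder\'on--Zygmund decomposition together with Marcinkiewicz interpolation, $T$ is of weak type $(1,1)$ and bounded on $L^p(\mathbb R)$ for every $p\in(1,\infty)$. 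Besides the size and regularity estimates (i)--(iii) of Definition~\ref{CZR}, these are the only facts about $T$ the argument needs.

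The heart of the proof is the claim that for each fixed $s\in(0,1)$ there is a constant $C_s$ with
\[
T^{\ast}f(x)\ \le\ C_s\Big[\big(\mathcal{M}(|Tf|^{s})(x)\big)^{1/s}+\mathcal{M}f(x)\Big]
\]
for all bounded, compactly supported $f$. To prove this, fix $x$ and $\e>0$ and write $f=f_1+f_2$ with $f_1=f\chi_{I(x,\e)}$. Since $x$ lies outside the support of $f_2$, the truncated integral is a genuine value of $T$: $T_\e f(x)=\int_{|x-y|>\e}K(x,y)f(y)\,dy=(Tf_2)(x)$. For $z\in I(x,\e/2)$ one has $|x-z|<|x-y|/2$ throughout the domain of integration, so the first-variable regularity estimate (iii) of Definition~\ref{CZR}, followed by splitting the resulting integral into the dyadic annuli $2^{k}\e<|x-y|\le 2^{k+1}\e$, gives $|(Tf_2)(x)-(Tf_2)(z)|\ls\mathcal{M}f(x)$. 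Consequently $|T_\e f(x)|\le|Tf(z)|+|Tf_1(z)|+C\,\mathcal{M}f(x)$; raising this to the power $s$, using subadditivity of $t\mapsto t^{s}$, and averaging in $z$ over $I(x,\e/2)$ produces on the right the term $\mathcal{M}(|Tf|^{s})(x)$, a term $\ls(\mathcal{M}f(x))^{s}$ coming from $\mathcal{M}f(x)$, and the local term $\frac{1}{|I(x,\e/2)|}\int_{I(x,\e/2)}|Tf_1|^{s}$, which by Kolmogorov's inequality and the weak $(1,1)$ bound for $T$ is $\ls\big(\|f_1\|_{L^1(\mathbb R)}/|I(x,\e/2)|\big)^{s}\ls(\mathcal{M}f(x))^{s}$. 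Taking the supremum over $\e>0$ yields the inequality.

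Granting it, the strong $(p,p)$ bound is immediate: fixing $p\in(1,\infty)$ and any $s\in(0,1)$, boundedness of $\mathcal{M}$ on $L^{p/s}(\mathbb R)$ (legitimate since $p/s>1$) gives $\big\|(\mathcal{M}(|Tf|^{s}))^{1/s}\big\|_{L^p(\mathbb R)}=\big\|\mathcal{M}(|Tf|^{s})\big\|_{L^{p/s}(\mathbb R)}^{1/s}\ls\big\||Tf|^{s}\big\|_{L^{p/s}(\mathbb R)}^{1/s}=\|Tf\|_{L^p(\mathbb R)}\ls\|f\|_{L^p(\mathbb R)}$, and adding $\|\mathcal{M}f\|_{L^p(\mathbb R)}\ls\|f\|_{L^p(\mathbb R)}$ and passing to general $f\in L^p(\mathbb R)$ by density finishes it. For the weak $(1,1)$ bound, fix $s\in(0,1)$: since $T$ is of weak type $(1,1)$, $|Tf|^{s}$ lies in the Lorentz space $L^{1/s,\infty}(\mathbb R)$ with $\||Tf|^{s}\|_{L^{1/s,\infty}}=\|Tf\|_{L^{1,\infty}}^{s}\ls\|f\|_{L^1(\mathbb R)}^{s}$; and $\mathcal{M}$, being of weak type $(1,1)$ and bounded on $L^\infty$, is bounded on $L^{1/s,\infty}(\mathbb R)$ by interpolation (as $1/s>1$), so $(\mathcal{M}(|Tf|^{s}))^{1/s}\in L^{1,\infty}(\mathbb R)$ with norm $\ls\|f\|_{L^1(\mathbb R)}$; combining with $\|\mathcal{M}f\|_{L^{1,\infty}(\mathbb R)}\ls\|f\|_{L^1(\mathbb R)}$ and Cotlar's inequality, and extending by density, gives $\|T^{\ast}f\|_{L^{1,\infty}(\mathbb R)}\ls\|f\|_{L^1(\mathbb R)}$. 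One can avoid Lorentz-space interpolation here by instead running a Calder\'on--Zygmund decomposition $f=g+b$ at height $\lambda$: the good part is controlled via $\|T^{\ast}g\|_{L^2(\mathbb R)}\ls\|g\|_{L^2(\mathbb R)}$, the case $p=2$ just proved, and the bad part $b=\sum_j b_j$ is controlled away from $\bigcup_j 3Q_j$ by the cancellation $\int b_j=0$ and the second-variable estimate (ii), the only extra point being that for each $\e$ the truncation sphere $\{|x-y|=\e\}$ is a two-point set in $\mathbb R$, so it meets at most two of the $Q_j$, each at distance comparable to $\e$ from $x$, and those terms admit a harmless direct size bound.

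The main obstacle is establishing Cotlar's inequality. Two points require care: one must pass from the pointwise value $T_\e f(x)$ to the behaviour of $Tf$ on a whole ball around $x$ using nothing but the kernel regularity of $K$ (this is what forces $\mathcal{M}(Tf)$ onto the right-hand side), and one must absorb the local piece $Tf_1$ even though $T$ is only of weak type $(1,1)$, not bounded, on $L^1$ (this is why the $L^{s}$-averaging with $s<1$ and Kolmogorov's inequality replace an ordinary $s=1$ average). In the self-contained route to weak $(1,1)$ the remaining subtlety is that truncation destroys the mean-zero property of those $b_j$ whose cube straddles the truncation sphere, and one must verify that only boundedly many cubes are affected and that they sit at the correct scale.
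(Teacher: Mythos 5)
This statement is quoted by the paper from Duoandikoetxea \cite[Theorem 5.14]{Duo01} and is not proved in the paper itself, so there is no in-paper argument to compare against; your proof is correct and is essentially the standard (and the cited source's) one: Cotlar's pointwise inequality established with the $L^{s}$-average, $s<1$, and Kolmogorov's inequality to absorb the local piece $Tf_1$, then boundedness of the Hardy--Littlewood maximal operator for the strong $(p,p)$ bounds, and either the Lorentz-space argument or a direct Calder\'on--Zygmund decomposition (noting that the truncation sphere meets only boundedly many cubes) for weak $(1,1)$. The only point worth flagging is minor: the $L^{p}$-boundedness of $T$ itself for $2<p<\infty$, which you invoke, needs duality with the transposed kernel in addition to weak $(1,1)$ plus interpolation, but this is standard and does not affect your argument.
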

Thus we see that $C_{\Gamma}^{\ast}$ is bounded on $\lpz$ for all $p\in(1, \fz)$. Recall that~$|b(x) - b(x+z)| < \e$ by our choice of~$z$.
Hence
\begin{align}\label{eq:thm2.39}
  \int_{\R}|{\rm L_1}|^p\,dx &\leq \int_{\R}  |b(x)-b(x+z)|^p |C_{\Gamma}^{\ast} f(x)|^p \,dx \noz\\
   &< \e^p \int_{\R}  |C_{\Gamma}^{\ast} f(x)|^p \,dx \noz\\
   &\ls  \e^p \| f\|_{\lpz}^p.
\end{align}
Combining the estimates~\eqref{eq:thm2.36}--\eqref{eq:thm2.39} of ${\rm L}_i,\,i\in\{1, 2,3,4\}$, we conclude that
\begin{align*}
\lf[\int_{\R}\lf|[b, \Clm]f(x)-[b, \Clm]f(x+z)\r|^p\,dx\r]^{1/p}
&\ls\sum_{i=1}^4\lf(\int_{\R} |{\rm L}_i|^p \,dx\r)^{1/p} \\
&\ls\e\|f\|_\lpz.
\end{align*}
This shows that $[b,\Clm]E$ satisfies hypothesis (c) in Theorem \ref{t-fre kol}. Hence,
$[b, \Clm]$ is a compact operator. This completes the proof of Theorem \ref{thm2}. \hfill\(\Box\)

\subsection*{Acknowledgements}
 T.T.T. Nguyen is supported by an
Australian Government Endeavour Postgraduate Scholarship. J. Li
and L.A. Ward are supported by the Australian Research Council,
Grant ARC-DP160100153. B.D. Wick is supported in part by
National Science Foundation grant DMS \#1560955.

\end{document}